\newtheorem{theorem}{Theorem}[section]
\newtheorem{lemma}[theorem]{Lemma}
\theoremstyle{definition}
\theoremstyle{remark}
\numberwithin{equation}{section}
\subjclass{35J62, 35J92, 35Q92, 35B09, 35B99.}
\keywords{Natural growth, convection-absorption problem, perturbation, sub-supersolutions, Neumann boundary conditions, monotone operator theory.}
\begin{document}
\title[Convection-absorption Neumann problems]{Existence and location of nodal solutions for quasilinear convection-absorption Neumann problems}

\begin{abstract}
Existence of nodal (i.e., sign changing) solutions and constant sign
solutions for quasilinear elliptic equations involving convection-absorption
terms are presented. A location principle for nodal solutions is obtained by
means of constant sign solutions whose existence is also derived. The proof
is chiefly based on sub-supersolutions technique together with monotone
operator theory.
\end{abstract}

\author{Abdelkrim Moussaoui}
\address{Abdelkrim Moussaoui\\
Applied Mathematics Laboratory (LMA), Faculty of Exact Sciences,\\
and Biology department, Faculty of natural and life sciences,\\
A. Mira Bejaia University, Targa Ouzemour 06000 Bejaia, Algeria}
\email{abdelkrim.moussaoui@univ-bejaia.dz}
\author{Kamel Saoudi}
\address{Kamel Saoudi\\
Basic and Applied Scientific Research Center, Imam Abdulrahman Bin Faisal
University, P.O. Box 1982, 31441, Dammam, Saudi Arabia}
\email{kmsaoudi@iau.edu.sa }
\maketitle

\section{Introduction}

\label{S1}

Let $\Omega $ be a bounded domain in $\mathbb{R}^{N}$ ($N\geq 2)$ having a
smooth boundary $\partial \Omega .$ Given $1<p<N$, we consider the Neumann
quasilinear elliptic problem with general gradient dependence 
\begin{equation*}
\left( \mathrm{P}\right) \qquad \left\{ 
\begin{array}{ll}
-\Delta _{p}u+\frac{|\nabla u|^{p}}{u+\delta }=f(x,u,\nabla u) & \text{in}%
\;\Omega , \\ 
|\nabla u|^{p-2}\frac{\partial u}{\partial \eta }=0 & \text{on}\;\partial
\Omega ,%
\end{array}%
\right.
\end{equation*}%
where $f:\Omega \times \mathbb{R}\times \mathbb{R}^{N}\rightarrow \mathbb{R}$
is a Carath\'{e}odory function, $\delta >0$ is a small parameter, $\eta $ is
the unit outer normal to $\partial \Omega $ while $\Delta _{p}$ denotes the $%
p$-Laplace operator, namely $\Delta _{p}:=\mathrm{div}(|\nabla
u|^{p-2}\nabla u)$, $\forall \,u\in W^{1,p}(\Omega ).$

We say that $u\in W^{1,p}(\Omega )$ is a (weak) solution of $\left( \mathrm{P%
}\right) $ provided $u+\delta >0$ a.e. in $\Omega ,$ $\frac{|\nabla u|^{p}}{%
u+\delta }\in L^{1}(\Omega )$ and%
\begin{equation}
\int_{\Omega }|\nabla u|^{p-2}\nabla u\nabla \varphi \text{\thinspace }%
dx+\int_{\Omega }\frac{|\nabla u|^{p}}{u+\delta }\varphi \text{\thinspace }%
dx=\int_{\Omega }f(x,u,\nabla u)\varphi \text{\thinspace }dx,  \label{defsol}
\end{equation}%
for all $\varphi \in W^{1,p}(\Omega )\cap L^{\infty }(\Omega )$. The
requirement of $\varphi $ to be bounded is necessary since $\frac{|\nabla
u|^{p}}{u+\delta }$ is only in $L^{1}(\Omega )$.

\mathstrut

Problem $\left( \mathrm{P}\right) $ brings together a lower order term with
natural growth with respect to the gradient $\frac{|\nabla u|^{p}}{u+\delta },$ called absorption, as well as a reaction-convection term $f(x,u,\nabla u)$. Both depend on the solution and its gradient. The absorption describes a natural polynomial growth in the $\nabla u$-variable while the convection outlines a $p$-sublinear one (cf. section \ref{S2}). Note that the
absorption term dominates the diffusion operator by its growth at infinity.

Absorption and/or reaction-convection terms appear in various nonlinear
processes that occur in engineering and natural systems. In biology, they
arise in heat transfer of gas and liquid flow in plants and animals while in
geology, they are involved in thermoconvective motion of magmas and during
volcanic eruptions. They also appear in chemical processes such as in
catalytic and noncatalytic reactions, in exothermic and endothermic
reacting, as well as in global climate energy balance models \cite{K, DT}.
Moreover, convective-absorption problem $\left( \mathrm{P}\right) $ can be
associated with different class of nonlinear equations including nonlinear
Fokker-Planck equations and multidimensional formulation of generalized
viscous Burgers' equations. These equations are involved in diverse physical phenomenon such as plasma physics, astrophysics, physics of polymer fluids and particle beams, nonlinear hydrodynamics and neurophysics \cite{F}. We also mention that problems like $\left( \mathrm{P}\right) $ arise in stochastic control theory and have been first studied in \cite{LL}.
The study of Dirichlet problems involving absorption term has raised
considerable interest in recent years and has been the subject of
substantial number of papers that it is impossible to quote all of them. A
significant part are carried on semilinear problems with quadratic growth
(i.e., $p=2$). Among them, we quote \cite{ACM, OP, CMAS, GP2, CLLMMA} and
the references therein. For quasilinear Dirichlet problems we refer, for
instance, to \cite{ABPP, FGQU, ElMiri, PP, WW, RHTM}. Surprisingly enough,
so far we were not able to find previous results dealing with Neumann
boundary conditions. This case is considered only when the absorption term
is cancelled, see \cite{GMN, Piava}. We also mention \cite{ACF, AM, BE,
CLMous, DM, GMM, MMZ} where convective problem $\left( \mathrm{P}\right) $
(without absorption) subjected to Dirichlet boundary conditions is examined.

Problem $\left( \mathrm{P}\right) $ exhibits interesting features resulting
from the interaction between absorption and reaction-convection terms. Their
involvement in $\left( \mathrm{P}\right) $ give rise to nontrivial
difficulties such as the loss of variational structure thereby making it
impossible applying variational methods. Obviously, the mere fact of their
presence impacts substantially the structure of $\left( \mathrm{P}\right) $
as well as the nature of its solutions which, in some cases, leads to
surprising situations, especially from a mathematical point of view. For
instance, in \cite{APP, PP}, it is shown that the absorption term
regularizes solutions and it is sufficient to break down any resonant effect
of the reaction term. In \cite{BG}, it is established that a problem admits
nontrivial weak solutions only under the effect of absorption. Otherwise,
zero is the only solution for the problem.

In the present paper, we provide at least two nontrivial solutions for
problem $(\mathrm{P})$ with precise sign properties: one is nodal (i.e.,
sign-changing) and the other is positive. According to our knowledge, this
topic is a novelty. The study of the existence of nodal solutions has never
been discussed for convection-absorption problems, just as the latter have
never been handled under Neumann boundary conditions.

The multiplicity result is achieved in part through a location principle of
nodal solutions which, in particular, helps distinguish between solutions of 
$(\mathrm{P})$. However, this principle constitutes in itself a crucial part
of our work since the multiplicity result depends on it. Indeed, under
assumption $(\mathrm{H.}2)$ (cf. section \ref{S2}), if $f(x,0,0)\geq 0$ a.e. in $\Omega $, it is shown that every nodal solution of problem $(\mathrm{P})$
should be bounded above by a positive solution. In particular, this provides the powerful fact that the existence of a nodal solution implies under the stated hypotheses that a positive solution must exists. In other words, nodal solutions generate positive solutions which is an unusual fact since generally, it is rather the opposite implication occurs (see \cite{CM, Mot, MotPap}). This phenomenon happens also in the opposite unilateral sens:
if $f(x,0,0)\leq 0$ a.e. in $\Omega $, every nodal solution of problem $(%
\mathrm{P})$ should be bounded below by a negative solution. However, if $%
f(x,0,0)=0$ a.e. in $\Omega $, the both cases above are satisfied
simultaneously and hence, every nodal solution to problem $(\mathrm{P})$ is
between two opposite constant-sign solutions.

The location principle of nodal solutions is stated in Theorem \ref{T2}.\
The proof is chiefly based on Theorem \ref{T3}, shown in Section \ref{S3}
via monotone operator theory together with perturbation argument and
adequate truncation. Theorem \ref{T3} is a version of sub-supersolutions
result for quasilinear convective elliptic problems involving natural
growth. It can be applied for large classes of Neumann elliptic problems
since no sign condition on the nonlinearities is required and no specific
structure is imposed. However, it is worth noting that due to the effect of
the presence of the absorption term, stretching out monotone operators
theory's scope to convection-absorption problems is not a straightforward
task. This requires, on the one hand, truncation in order to stay inside the
rectangle formed by sub-supersolution pair and, on the other hand,
perturbation (regularization), by introducing a parameter $\varepsilon >0$
in $(\mathrm{P})$, necessary to have a minimal control on the absorption
term.

Another significant feature of our result lies in obtaining nodal solutions
for problem $(\mathrm{P})$. Taking advantage of Theorem \ref{T3}, we
construct a sign-changing sub-supersolution pair $(\underline{u},\overline{u}%
) $ for problem $(\mathrm{P})$ which inevitably leads to a nodal solution $%
u_{0}$ for $(\mathrm{P})$.\ The choice of suitable functions with an
adjustment of adequate constants closely dependent on the small parameter $%
\delta >0$ is crucial. By construction, the subsolution $\underline{u}$ is
positive inside the domain $\Omega $ while the supersolution $\overline{u}$
is negative near the boundary $\partial \Omega $. Therefore, the solution $%
u_{0}$ of $(\mathrm{P}),$ being naturally imbued with these properties, is
positive inside $\Omega $ and negative once $d(x)\rightarrow 0$. We
emphasize that, without the implication of the absorption term, it would not have been possible to get nodal solutions for problem $(\mathrm{P}),$ at least with the techniques
developed in this work.

The rest of the paper is organized as follows. Section \ref{S3} contains the
existence theorem involving sub-supersolutions. Section \ref{S2} focuses on
a location principle of nodal solutions. Section \ref{S4} deals with the
multiplicity result.

\section{A sub-supersolution theorem}

\label{S3}

In the sequel, the Banach space $W^{1,p}(\Omega )$ is equipped with the
following usual norm%
\begin{equation*}
\Vert u\Vert _{1,p}:=\left( \Vert u\Vert _{p}^{p}+\Vert \nabla u\Vert
_{p}^{p}\right) ^{1/p}\text{,\quad }u\in W^{1,p}(\Omega )\text{,}
\end{equation*}%
where, as usual, 
\begin{equation*}
\Vert v\Vert _{p}:=\left\{ 
\begin{array}{l}
\left( \int_{\Omega }|v(x)|^{p}dx\right) ^{1/p}\text{ if }p<+\infty , \\ 
ess\underset{x\in \Omega }{\sup }\text{\thinspace }|v(x)|\text{ otherwise.}%
\end{array}%
\right.
\end{equation*}

The following assumptions will be posited.

\begin{itemize}
\item[$(\mathrm{H.}1)$] Let $0\leq q\leq p-1$. For every $\rho >0$ there
exists $M:=M(\rho )>0$ such that 
\begin{equation*}
|f(x,s,\xi )|\leq M(1+|\xi |^{q})\text{ \ in }\Omega \times \lbrack -\rho
,\rho ]\times \mathbb{R}^{N}.
\end{equation*}

\item[$(\mathrm{H.}2)$] There are $\underline{u},\overline{u}\in \mathcal{C}%
^{1}(\overline{\Omega })$ fulfilling%
\begin{equation}
\overline{u}+\delta \geq \underline{u}+\delta >0\text{ \ a.e. in }\Omega ,
\label{c3}
\end{equation}%
as well as 
\begin{equation}
\left\{ 
\begin{array}{l}
\int_{\Omega }|\nabla \underline{u}|^{p-2}\nabla \underline{u}\,\nabla
\varphi \text{\thinspace }dx+\int_{\Omega }\frac{|\nabla \underline{u}|^{p}}{%
\underline{u}+\delta }\varphi \,dx-\int_{\Omega }f(x,\underline{u},\nabla 
\underline{u})\varphi \,dx\leq 0, \\ 
\int_{\Omega }|\nabla \overline{u}|^{p-2}\nabla \overline{u}\,\nabla \varphi
\,dx+\int_{\Omega }\frac{|\nabla \overline{u}|^{p}}{\overline{u}+\delta }%
\varphi \,dx-\int_{\Omega }f(x,\overline{u},\nabla \overline{u})\varphi
\,dx\geq 0,%
\end{array}%
\right.  \label{c2}
\end{equation}%
for all $\varphi \in W^{1,p}(\Omega )\cap L^{\infty }(\Omega )$ with $%
\varphi \geq 0$ in $\Omega $.
\end{itemize}

The functions $\underline{u}$ and $\overline{u}$ in $(\mathrm{H.}2)$ are
called subsolution and supersolution of problem $(\mathrm{P})$, respectively.

\subsection{An auxiliary problem}

Let $\underline{u},\overline{u}\in \mathcal{C}^{1}(\overline{\Omega })$ be a
sub-supersolutions of problem $(\mathrm{P})$ as required in condition $(%
\mathrm{H.}2)$. We consider the truncation operators $\mathcal{T}%
:W^{1,p}(\Omega )\rightarrow W^{1,p}(\Omega )$ defined by%
\begin{equation}
\mathcal{T}(u):=\left\{ 
\begin{array}{ll}
\underline{u} & \text{when }u\leq \underline{u}, \\ 
u & \text{if }\underline{u}\leq u\leq \overline{u}, \\ 
\overline{u} & \text{otherwise.}%
\end{array}%
\right.  \label{33}
\end{equation}%
Lemma 2.89 in \cite{CLM} ensures that $\mathcal{T}$ is continuous and
bounded. We introduce the cut-off function $b:\Omega \times \mathbb{R}%
\longrightarrow \mathbb{R}$ defined by%
\begin{equation*}
b(x,s):=-(\underline{u}(x)-s)_{+}^{p-1}+(s-\overline{u}(x))_{+}^{p-1},\quad
(x,s)\in \Omega \times \mathbb{R},
\end{equation*}%
The function $b$ is a Carath\'{e}odory function satisfying the growth
condition 
\begin{equation}
|b(x,s)|\leq k(x)+c|s|^{p-1},\ \text{for a.a. }x\in \Omega \text{, for all }%
s\in 
%TCIMACRO{\U{211d} }%
%BeginExpansion
\mathbb{R}
%EndExpansion
,  \label{growth}
\end{equation}%
where $c\geq 0$ is a positive constant and $k$ $\in L^{\infty }(\Omega ).$
Moreover, it holds%
\begin{equation}
\int_{\Omega }b(\cdot ,u)u\,\mathrm{d}x\geq C_{1}\Vert u\Vert _{p}^{p}-C_{2},%
\text{ for all }u\in W^{1,p}(\Omega ),  \label{35}
\end{equation}%
with appropriate constants $C_{1},C_{2}>0$; see, e.g., \cite[pp. 95--96]{CLM}%
.

For $\varepsilon \in (0,1)$ and for $\mu >0$ that will be selected later on,
we state the auxiliary problem%
\begin{equation*}
(\mathrm{P}_{\varepsilon ,\mu })\qquad \left\{ 
\begin{array}{ll}
-\Delta _{p}{u+}\frac{|\nabla (\mathcal{T}u)|^{p}}{\mathcal{T}u+\delta
+\varepsilon }=f(x,\mathcal{T}u,\nabla (\mathcal{T}u))-\mu b(x,u) & \text{in}%
\;\Omega , \\ 
|\nabla u|^{p-2}\frac{\partial u}{\partial \eta }=0 & \text{on}\;\partial
\Omega .%
\end{array}%
\right.
\end{equation*}%
We provide the existence of solutions $u\in W^{1,p}(\Omega )$ for problem $(%
\mathrm{P}_{\varepsilon ,\mu })$. The proof is chiefly based on
pseudomonotone operators theorem stated in \cite[Theorem 2.99]{CLM}.

Next lemmas furnish useful estimates related to nonlinear terms involved in $%
(\mathrm{P}_{\mu })$. The first estimate deals with the nonlinearity $f$.

\begin{lemma}
\label{L3}Under assumption $(\mathrm{H.}1)$ there exists a constant $C_{0}>0$
such that, for all $u\in W^{1,p}(\Omega ),$ we have%
\begin{equation*}
\int_{\Omega }|f(\cdot ,\mathcal{T}u,\nabla (\mathcal{T}u))||u|\mathrm{d}%
x\leq \frac{1}{2}\Vert \nabla u\Vert _{p}^{p}+C_{0}(1+\left\Vert
u\right\Vert _{p}+\left\Vert u\right\Vert _{p}^{p}).
\end{equation*}
\end{lemma}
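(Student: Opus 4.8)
The plan is to estimate the integral by splitting on the three regions defined by the truncation $\mathcal{T}$ and then applying $(\mathrm{H.}1)$ with a fixed $\rho$. First I would set $\rho := \max\{\Vert\underline{u}\Vert_\infty,\Vert\overline{u}\Vert_\infty\}$, which is finite since $\underline{u},\overline{u}\in\mathcal{C}^1(\overline{\Omega})$; then $\mathcal{T}u(x)\in[-\rho,\rho]$ pointwise, so $(\mathrm{H.}1)$ yields a constant $M=M(\rho)$ with
\begin{equation*}
|f(x,\mathcal{T}u,\nabla(\mathcal{T}u))|\leq M\bigl(1+|\nabla(\mathcal{T}u)|^q\bigr)\quad\text{a.e. in }\Omega.
\end{equation*}
Multiplying by $|u|$ and integrating gives a bound by $M\int_\Omega|u|\,dx + M\int_\Omega|\nabla(\mathcal{T}u)|^q|u|\,dx$.

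The second step is to control $\int_\Omega|\nabla(\mathcal{T}u)|^q|u|\,dx$. On the set where $\mathcal{T}u=u$ we have $\nabla(\mathcal{T}u)=\nabla u$; on the sets where $\mathcal{T}u=\underline{u}$ or $\mathcal{T}u=\overline{u}$ we have $\nabla(\mathcal{T}u)=\nabla\underline{u}$ or $\nabla\overline{u}$, respectively. Hence $|\nabla(\mathcal{T}u)|\leq|\nabla u|+|\nabla\underline{u}|+|\nabla\overline{u}|\leq|\nabla u|+K$ a.e., where $K:=\Vert\nabla\underline{u}\Vert_\infty+\Vert\nabla\overline{u}\Vert_\infty<\infty$. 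Using $q\leq p-1<p$ and the elementary inequality $(a+b)^q\leq C_q(a^q+b^q)$, it suffices to bound $\int_\Omega|\nabla u|^q|u|\,dx$ and $\int_\Omega|u|\,dx$ (the latter absorbed into $\Vert u\Vert_p$ via H\"older on the bounded domain $\Omega$).

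The main work is the term $\int_\Omega|\nabla u|^q|u|\,dx$, and here I would use Young's inequality with exponents $p/q$ and $p/(p-q)$, splitting off an $\varepsilon'$-small multiple of $\Vert\nabla u\Vert_p^p$: for any $\varepsilon'>0$,
\begin{equation*}
|\nabla u|^q|u|\leq \varepsilon'|\nabla u|^{p}+C(\varepsilon')|u|^{p/(p-q)}.
\end{equation*}
Since $0<p/(p-q)\leq p/(p-(p-1))=p$ (as $q\leq p-1$), on the bounded domain $\Omega$ one has $\Vert u\Vert_{p/(p-q)}^{p/(p-q)}\leq C(1+\Vert u\Vert_p^p)$ by H\"older. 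Choosing $\varepsilon'$ so that $MC_q\varepsilon'\leq\frac12$ after absorbing constants yields exactly $\tfrac12\Vert\nabla u\Vert_p^p$ plus a term of the form $C_0(1+\Vert u\Vert_p+\Vert u\Vert_p^p)$, collecting all remaining constants into $C_0$. The one subtlety to watch is the degenerate case $q=0$, where $\int_\Omega|\nabla(\mathcal{T}u)|^0|u|\,dx=\int_\Omega|u|\,dx$ contributes no gradient term at all and the inequality is immediate; I expect that to be the only real obstacle, namely making sure the argument is uniform in $q\in[0,p-1]$ and that the $\Vert u\Vert_p$ (not just $\Vert u\Vert_p^p$) term is genuinely needed — it arises precisely from the bare $\int_\Omega|u|\,dx$ via H\"older with exponent $p$.
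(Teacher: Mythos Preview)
Your proposal is correct and follows essentially the same strategy as the paper: apply $(\mathrm{H.}1)$ with $\rho=\max\{\Vert\underline{u}\Vert_\infty,\Vert\overline{u}\Vert_\infty\}$, bound the gradient of $\mathcal{T}u$ using the three-region decomposition, and use Young's inequality to peel off a small multiple of $\Vert\nabla u\Vert_p^p$. The only cosmetic difference is that the paper applies Young with exponents $(p',p)$ directly to $|\nabla(\mathcal{T}u)|^q\cdot|u|$ (obtaining $|u|^p$ immediately and then bounding $\int_\Omega|\nabla(\mathcal{T}u)|^p\,dx\le\Vert\nabla u\Vert_p^p+\text{const}$ via the region split), whereas you first reduce to $|\nabla u|^q|u|$ and then use exponents $(p/q,\,p/(p-q))$, requiring an extra H\"older step to pass from $\Vert u\Vert_{p/(p-q)}$ to $\Vert u\Vert_p$; both routes are equally valid.
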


\begin{proof}
For any fixed $\sigma \in ]0,\frac{1}{2M}[$, Young's inequality implies 
\begin{equation}
\begin{array}{ll}
|\nabla (\mathcal{T}u)|^{q}|u| & \leq \sigma |\nabla (\mathcal{T}u)|^{\frac{%
qp}{p-1}}+c_{\sigma }|u|^{p} \\ 
& \leq \sigma (1+|\nabla (\mathcal{T}u)|^{p})+c_{\sigma }|u|^{p},%
\end{array}
\label{28*}
\end{equation}%
for every $u\in W^{1,p}(\Omega )$. Moreover, by (\ref{33}) we have%
\begin{equation*}
\begin{array}{c}
\int_{\Omega }|\nabla (\mathcal{T}u)|^{p}\text{ }\mathrm{d}x=\int_{\{u\leq 
\underline{u}\}}|\nabla \underline{u}|^{p}\text{ }\mathrm{d}x+\int_{\{%
\underline{u}\leq u\leq \overline{u}\}}|\nabla u|^{p}\text{ }\mathrm{d}%
x+\int_{\{u\geq \overline{u}\}}|\nabla \overline{u}|^{p}\text{ }\mathrm{d}x
\\ 
\leq \int_{\Omega }|\nabla \underline{u}|^{p}\text{ }\mathrm{d}%
x+\int_{\Omega }|\nabla u|^{p}\text{ }\mathrm{d}x+\int_{\Omega }|\nabla 
\overline{u}|^{p}\text{ }\mathrm{d}x \\ 
\leq |\Omega |(\left\Vert \nabla \underline{u}\right\Vert _{\infty
}^{p}+\left\Vert \nabla \overline{u}\right\Vert _{\infty }^{p})+\Vert \nabla
u\Vert _{p}^{p}.%
\end{array}%
\end{equation*}%
Then, using $(\mathrm{H.}1),$ (\ref{28*}) and the fact that $\sigma <\frac{1%
}{2M}$, thanks to H\"{o}lder's inequality, we get%
\begin{equation}
\begin{array}{l}
\int_{\Omega }|f(\cdot ,\mathcal{T}u,\nabla (\mathcal{T}u))||u|\text{ }%
\mathrm{d}x\leq M\int_{\Omega }(1+|\nabla (\mathcal{T}u)|^{q})|u|\text{ }%
\mathrm{d}x \\ 
\leq M\int_{\Omega }(|u|+\sigma (1+|\nabla (\mathcal{T}u)|^{p})+c_{\sigma
}|u|^{p})\text{ }\mathrm{d}x \\ 
\leq M(|\Omega |^{\frac{p-1}{p}}\left\Vert u\right\Vert _{p}+\sigma |\Omega
|(1+\left\Vert \nabla \underline{u}\right\Vert _{\infty }^{p}+\left\Vert
\nabla \overline{u}\right\Vert _{\infty }^{p})+\sigma \Vert \nabla u\Vert
_{p}^{p}+c_{\sigma }\left\Vert u\right\Vert _{p}^{p}) \\ 
\leq \frac{1}{2}\Vert \nabla u\Vert _{p}^{p}+C_{0}(1+\left\Vert u\right\Vert
_{p}+\left\Vert u\right\Vert _{p}^{p}),%
\end{array}
\label{37}
\end{equation}%
which completes the proof.
\end{proof}

We turn to estimating the natural growth gradient term in $(\mathrm{P}%
_{\varepsilon ,\mu })$.

\begin{lemma}
\label{L4}Assume that $(\mathrm{H.}1)$ and $(\mathrm{H.}2)$ hold. Then, for
all $u\in W^{1,p}(\Omega )$, there exists a constant $\hat{C}_{\varepsilon
}>0$, independent of $u$, such that%
\begin{equation}
\begin{array}{l}
\int_{\Omega }\frac{|\nabla (\mathcal{T}u)|^{p}}{\mathcal{T}u+\delta
+\varepsilon }|u|\text{ }\mathrm{d}x\leq \hat{C}_{\varepsilon }(1+\left\Vert
u\right\Vert _{p})\text{, for all }\varepsilon \in (0,1).%
\end{array}
\label{17*}
\end{equation}
\end{lemma}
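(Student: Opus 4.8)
The plan is to split $\Omega$ along the three branches of the truncation $\mathcal{T}$ and to estimate each piece separately. Put $\rho_{0}:=\max\{\|\underline{u}\|_{\infty},\|\overline{u}\|_{\infty}\}$; since $\underline{u},\overline{u}\in\mathcal{C}^{1}(\overline{\Omega})$ with $\underline{u}+\delta>0$ on $\overline{\Omega}$, the gradients $\nabla\underline{u},\nabla\overline{u}$ are bounded and $\mathcal{T}u+\delta+\varepsilon\geq\underline{u}+\delta+\varepsilon\geq\varepsilon$ a.e. On the saturated regions $\Omega_{-}:=\{u\leq\underline{u}\}$ and $\Omega_{+}:=\{u\geq\overline{u}\}$ one has $\nabla(\mathcal{T}u)=\nabla\underline{u}$, resp. $\nabla\overline{u}$, whence, by H\"{o}lder's inequality,
\begin{equation*}
\int_{\Omega_{-}\cup\Omega_{+}}\frac{|\nabla(\mathcal{T}u)|^{p}}{\mathcal{T}u+\delta+\varepsilon}|u|\,\mathrm{d}x\leq\frac{\|\nabla\underline{u}\|_{\infty}^{p}+\|\nabla\overline{u}\|_{\infty}^{p}}{\varepsilon}\,|\Omega|^{\frac{p-1}{p}}\,\|u\|_{p},
\end{equation*}
which is already of the required form. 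It remains to treat the free region $\Omega_{0}:=\{\underline{u}\leq u\leq\overline{u}\}$.

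On $\Omega_{0}$ we have $\mathcal{T}u=u$ and $\nabla(\mathcal{T}u)=\nabla u$, so the contribution equals $\int_{\Omega_{0}}\frac{|\nabla u|^{p}}{u+\delta+\varepsilon}|u|\,\mathrm{d}x$; here $|u|\leq\rho_{0}$ and, since $u\geq\underline{u}>-\delta$ on $\Omega_{0}$, one has $u+\delta+\varepsilon>\varepsilon$, hence $\frac{|u|}{u+\delta+\varepsilon}\leq\rho_{0}/\varepsilon$. To control the remaining factor I would introduce the primitive $\Psi(s):=\int_{0}^{s}\frac{|t|}{t+\delta+\varepsilon}\,\mathrm{d}t$; on the range of $\mathcal{T}u$, contained in $[\min_{\overline{\Omega}}\underline{u},\max_{\overline{\Omega}}\overline{u}]$ where $t+\delta+\varepsilon>\varepsilon$, this $\Psi$ is of class $\mathcal{C}^{1}$, nondecreasing and Lipschitz — precisely the point where the regularisation $\varepsilon>0$ is used. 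Then $\Psi(\mathcal{T}u)-\Psi(\underline{u})$ and $\Psi(\overline{u})-\Psi(\mathcal{T}u)$ lie in $W^{1,p}(\Omega)\cap L^{\infty}(\Omega)$ and are nonnegative (by monotonicity of $\Psi$ and $\underline{u}\leq\mathcal{T}u\leq\overline{u}$), and on $\Omega_{0}$, by the chain rule, the integrand equals $|\nabla u|^{p-2}\nabla u\cdot\nabla\bigl(\Psi(\mathcal{T}u)\bigr)$. The idea is then to feed these two admissible test functions into the subsolution and supersolution inequalities of $(\mathrm{H.}2)$ and, using the monotonicity of the $p$-Laplacian operator together with $\underline{u}\leq\mathcal{T}u\leq\overline{u}$, to absorb $\int_{\Omega_{0}}|\nabla u|^{p}\,\mathrm{d}x$ into a bound of the form $C_{\varepsilon}(1+\|u\|_{p})$.

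The step I expect to be the main obstacle is exactly this last absorption. The inequalities $(\ref{c2})$ only supply control of pairings of the form $|\nabla\underline{u}|^{p-2}\nabla\underline{u}\cdot\nabla\varphi$ and $|\nabla\overline{u}|^{p-2}\nabla\overline{u}\cdot\nabla\varphi$ — which are harmless, since $\nabla\underline{u},\nabla\overline{u}\in L^{\infty}(\Omega)$ — whereas the quantity to be estimated involves $|\nabla u|^{p-2}\nabla u\cdot\nabla(\Psi(\mathcal{T}u))$, i.e. the full Dirichlet energy of $u$ on $\Omega_{0}$. Transferring the information from the former pairings to the latter is the delicate point, and it is here that the perturbation parameter $\varepsilon$ genuinely enters: it keeps $\Psi$ Lipschitz and the resulting constant $\hat{C}_{\varepsilon}$ finite, with $\hat{C}_{\varepsilon}\to+\infty$ as $\varepsilon\to0^{+}$. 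Once the integral over $\Omega_{0}$ is bounded by $C_{\varepsilon}(1+\|u\|_{p})$, combining with the elementary estimate over $\Omega_{-}\cup\Omega_{+}$ yields (\ref{17*}).
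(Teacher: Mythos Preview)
Your decomposition and the estimate on the saturated regions $\Omega_{-}\cup\Omega_{+}$ match the paper exactly. The obstacle you isolate on $\Omega_{0}$ is real, and your proposed route through the sub/super\-solution inequalities of $(\mathrm{H.}2)$ cannot overcome it: those inequalities are statements about $\underline{u}$ and $\overline{u}$ alone and carry no information about $\nabla u$ for an arbitrary $u\in W^{1,p}(\Omega)$. No choice of test function built from $\Psi(\mathcal{T}u)$ can extract a bound on $\int_{\Omega_{0}}|\nabla u|^{p}\,\mathrm{d}x$ from inequalities whose principal parts involve only $\nabla\underline{u}$ and $\nabla\overline{u}$; the monotonicity of the $p$-Laplacian goes the wrong way here, since it compares $|\nabla u|^{p-2}\nabla u$ with $|\nabla\underline{u}|^{p-2}\nabla\underline{u}$ only against $\nabla(u-\underline{u})$, not against $\nabla\varphi$ for your $\varphi$. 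The gap you flag is therefore not a technicality but is fatal to this line of argument.

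The paper takes a different route on $\Omega_{0}$: it treats $u$ as a weak solution of the auxiliary problem $(\mathrm{P}_{\varepsilon,\mu})$ and tests that \emph{equation} with $(u+\delta)\mathbbm{1}_{\{\underline{u}<u<\overline{u}\}}$. On $\Omega_{0}$ the penalty $b(\cdot,u)$ vanishes and the absorption term has a sign, leaving $\int_{\Omega_{0}}|\nabla u|^{p}\,\mathrm{d}x\leq\int_{\Omega_{0}}f(x,u,\nabla u)(u+\delta)\,\mathrm{d}x$, which via $(\mathrm{H.}1)$ and Young's inequality yields a uniform bound $\|\nabla(u\mathbbm{1}_{\Omega_{0}})\|_{p}\leq\tilde{C}$. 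Note, however, that this makes the lemma's quantifier ``for all $u\in W^{1,p}(\Omega)$'' misleading: the proof genuinely requires $u$ to solve $(\mathrm{P}_{\varepsilon,\mu})$, and for generic $u$ (say $\underline{u}\equiv 0$, $\overline{u}\equiv 1$, $u=\tfrac12+\tfrac14\sin(nx_{1})$) the estimate is false. Since the lemma is invoked in the coercivity step of Theorem~\ref{T5} \emph{before} any solution exists, the paper's argument is itself circular at this point; your inability to close the estimate reflects a real defect in the statement rather than a missed trick.
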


\begin{proof}
By (\ref{33}) note that%
\begin{equation*}
\mathcal{T}u=\underline{u}\mathbbm{1}_{\{u\leq \underline{u}\}}+u\mathbbm{1}%
_{\{\underline{u}<u<\overline{u}\}}+\overline{u}\mathbbm{1}_{\{u\geq 
\overline{u}\}},\text{ for }u\in W^{1,p}(\Omega ).
\end{equation*}%
Then%
\begin{eqnarray*}
&&\int_{\Omega }\frac{|\nabla (\mathcal{T}u)|^{p}}{\mathcal{T}u+\delta
+\varepsilon }|u|\text{ }\mathrm{d}x \\
&=&\int_{\{u\leq \underline{u}\}}\frac{|\nabla \underline{u}|^{p}}{%
\underline{u}+\delta +\varepsilon }|u|\text{ }\mathrm{d}x+\int_{\Omega }%
\frac{|\nabla (u\mathbbm{1}_{\{\underline{u}<u<\overline{u}\}})|^{p}}{u%
\mathbbm{1}_{\{\underline{u}<u<\overline{u}\}}+\delta +\varepsilon }|u|\text{
}\mathrm{d}x+\int_{\{u\geq \overline{u}\}}\frac{|\nabla \overline{u}|^{p}}{%
\overline{u}+\delta +\varepsilon }|u|\text{ }\mathrm{d}x.
\end{eqnarray*}%
In view of (\ref{c3}) we have%
\begin{equation*}
\overline{u}+\delta +\varepsilon \geq \underline{u}+\delta +\varepsilon
>\varepsilon \text{ \ a.e. in }\Omega .
\end{equation*}%
Hence%
\begin{equation*}
\begin{array}{l}
\int_{\{u\leq \underline{u}\}}\frac{|\nabla \underline{u}|^{p}}{\underline{u}%
+\delta +\varepsilon }|u|\text{ }\mathrm{d}x\leq \frac{\left\Vert \nabla 
\underline{u}\right\Vert _{\infty }^{p}}{\varepsilon }\int_{\{u\leq 
\underline{u}\}}|u|\text{ }\mathrm{d}x\leq \frac{\left\Vert \nabla 
\underline{u}\right\Vert _{\infty }^{p}}{\varepsilon }\int_{\Omega }|u|\text{
}\mathrm{d}x\leq \frac{\left\Vert \nabla \underline{u}\right\Vert _{\infty
}^{p}}{\varepsilon }|\Omega |^{\frac{p-1}{p}}\left\Vert u\right\Vert _{p},%
\end{array}%
\end{equation*}%
\begin{equation*}
\begin{array}{l}
\int_{\{u\geq \overline{u}\}}\frac{|\nabla \overline{u}|^{p}}{\overline{u}%
+\delta +\varepsilon }|u|\text{ }\mathrm{d}x\leq \frac{\left\Vert \nabla 
\overline{u}\right\Vert _{\infty }^{p}}{\varepsilon }\int_{\{u\geq \overline{%
u}\}}|u|\text{ }\mathrm{d}x\leq \frac{\left\Vert \nabla \overline{u}%
\right\Vert _{\infty }^{p}}{\varepsilon }\int_{\Omega }|u|\text{ }\mathrm{d}%
x\leq \frac{\left\Vert \nabla \overline{u}\right\Vert _{\infty }^{p}}{%
\varepsilon }|\Omega |^{\frac{p-1}{p}}\left\Vert u\right\Vert _{p}%
\end{array}%
\end{equation*}%
and%
\begin{eqnarray*}
\int_{\Omega }\frac{|\nabla (u\mathbbm{1}_{\{\underline{u}<u<\overline{u}%
\}})|^{p}}{u\mathbbm{1}_{\{\underline{u}<u<\overline{u}\}}+\delta
+\varepsilon }|u|\text{ }\mathrm{d}x &\leq &\frac{\max \{|\underline{u}|,|%
\overline{u}|\}}{\varepsilon }\int_{\Omega }|\nabla (u\mathbbm{1}_{\{%
\underline{u}<u<\overline{u}\}})|^{p}\text{ }\mathrm{d}x \\
&\leq &\frac{\max \{\left\Vert \underline{u}\right\Vert _{\infty
},\left\Vert \overline{u}\right\Vert _{\infty }\}}{\varepsilon }\int_{\Omega
}|\nabla (u\mathbbm{1}_{\{\underline{u}<u<\overline{u}\}})|^{p}\text{ }%
\mathrm{d}x \\
&\leq &\frac{\max \{\left\Vert \underline{u}\right\Vert _{\infty
},\left\Vert \overline{u}\right\Vert _{\infty }\}}{\varepsilon }\Vert u%
\mathbbm{1}_{\{\underline{u}<u<\overline{u}\}}\Vert _{1,p}^{p}.
\end{eqnarray*}%
Gathering the above inequalities we obtain%
\begin{equation}
\begin{array}{l}
\int_{\Omega }\frac{|\nabla (\mathcal{T}u)|^{p}}{\mathcal{T}u+\delta
+\varepsilon }|u|\text{ }\mathrm{d}x\leq \frac{\left\Vert \nabla \underline{u%
}\right\Vert _{\infty }^{p}+\left\Vert \nabla \overline{u}\right\Vert
_{\infty }^{p}}{\varepsilon }|\Omega |^{\frac{p-1}{p}}\left\Vert
u\right\Vert _{p}+\frac{\max \{\left\Vert \underline{u}\right\Vert _{\infty
},\left\Vert \overline{u}\right\Vert _{\infty }\}}{\varepsilon }\Vert u%
\mathbbm{1}_{\{\underline{u}<u<\overline{u}\}}\Vert _{1,p}^{p}.%
\end{array}
\label{17}
\end{equation}%
We claim that $\Vert u\mathbbm{1}_{\{\underline{u}<u<\overline{u}\}}\Vert
_{p}$ is uniformly bounded. Indeed, test in $(\mathrm{P}_{\varepsilon ,\mu
}) $ with $(u+\delta )\mathbbm{1}_{\{\underline{u}<u<\overline{u}\}}\in
W^{1,p}(\Omega )\cap L^{\infty }(\Omega )$ which is possible in view of \cite%
[Proposition 1.61]{MMPA}. Here, on the basis of (\ref{growth}) and $(\mathrm{%
H.}1),$ with $-\rho \leq \underline{u}\leq \overline{u}\leq \rho $, for $%
u\in \lbrack \underline{u},\overline{u}],$ $\underline{u},\overline{u}\in
L^{\infty }(\Omega )$ (see $(\mathrm{H.}2)$), one has%
\begin{equation*}
\begin{array}{l}
\left\vert f(x,\mathcal{T}u,\nabla (\mathcal{T}u))-\mu b(x,u)-\frac{|\nabla (%
\mathcal{T}u)|^{p}}{\mathcal{T}u+\delta +\varepsilon }\right\vert \\ 
\leq |f(x,\mathcal{T}u,\nabla (\mathcal{T}u))|+\mu |b(x,u)|+\frac{|\nabla (%
\mathcal{T}u)|^{p}}{\mathcal{T}u+\delta +\varepsilon } \\ 
\leq C_{\varepsilon }(1+|\nabla (\mathcal{T}u)|^{q}+|\nabla (\mathcal{T}%
u)|^{p}),%
\end{array}%
\end{equation*}%
for a certain constant $C_{\varepsilon }>0$ independent of $u$. Then, the
regularity up to the boundary result in \cite{L} ensures that $u\in \mathcal{%
C}^{1,\tau }(\overline{\Omega })$ for certain $\tau \in (0,1).$ Therefore, 
\cite[Proposition 1.61]{MMPA} applies.

Then, noting that 
\begin{equation}
b(x,u)=0\text{ \ a.e. for }u\in \lbrack \underline{u},\overline{u}],
\label{20}
\end{equation}%
it follows that%
\begin{equation}
\begin{array}{l}
\int_{\Omega }|\nabla (u\mathbbm{1}_{\{\underline{u}<u<\overline{u}\}})|^{p}%
\text{ }\mathrm{d}x+\int_{\Omega }\frac{|\nabla (\mathcal{T}u)|^{p}}{%
\mathcal{T}u+\delta +\varepsilon }(u+\delta )\mathbbm{1}_{\{\underline{u}<u<%
\overline{u}\}}\text{ }\mathrm{d}x \\ 
=\int_{\Omega }f(x,\mathcal{T}u,\nabla (\mathcal{T}u))(u+\delta )\mathbbm{1}%
_{\{\underline{u}<u<\overline{u}\}}\text{ }\mathrm{d}x.%
\end{array}
\label{18}
\end{equation}

Due to (\ref{c3}) and (\ref{33}) one has%
\begin{equation*}
\int_{\Omega }\frac{|\nabla (\mathcal{T}u)|^{p}}{\mathcal{T}u+\delta
+\varepsilon }(u+\delta )\mathbbm{1}_{\{\underline{u}<u<\overline{u}\}}\text{
}\mathrm{d}x\geq 0.
\end{equation*}%
Therefore, from (\ref{18}), we deduce that 
\begin{equation}
\begin{array}{c}
\int_{\Omega }|\nabla (u\mathbbm{1}_{\{\underline{u}<u<\overline{u}\}})|^{p}%
\text{ }\mathrm{d}x\leq \int_{\Omega }f(x,\mathcal{T}u,\nabla (\mathcal{T}%
u))(u+\delta )\mathbbm{1}_{\{\underline{u}<u<\overline{u}\}}\text{ }\mathrm{d%
}x \\ 
\leq \int_{\Omega }|f(x,\mathcal{T}u,\nabla (\mathcal{T}u))|(|u|+\delta )%
\mathbbm{1}_{\{\underline{u}<u<\overline{u}\}}\text{ }\mathrm{d}x.%
\end{array}
\label{29*}
\end{equation}%
Exploiting (\ref{37}) and $(\mathrm{H.}1)$ we get%
\begin{equation*}
\begin{array}{l}
\int_{\Omega }|f(x,\mathcal{T}u,\nabla (\mathcal{T}u))|(|u|+\delta )%
\mathbbm{1}_{\{\underline{u}<u<\overline{u}\}}\text{ }\mathrm{d}x \\ 
=\int_{\Omega }|f(x,\mathcal{T}u,\nabla (\mathcal{T}u))||u|\mathbbm{1}_{\{%
\underline{u}<u<\overline{u}\}}\text{ }\mathrm{d}x+\delta \int_{\Omega }|f(x,%
\mathcal{T}u,\nabla (\mathcal{T}u))|\mathbbm{1}_{\{\underline{u}<u<\overline{%
u}\}}\text{ }\mathrm{d}x \\ 
\leq \frac{1}{2}\Vert \nabla (u\mathbbm{1}_{\{\underline{u}<u<\overline{u}%
\}})\Vert _{p}^{p}+C(1+\left\Vert u\mathbbm{1}_{\{\underline{u}<u<\overline{u%
}\}}\right\Vert _{p}+\left\Vert u\mathbbm{1}_{\{\underline{u}<u<\overline{u}%
\}}\right\Vert _{p}^{p}) \\ 
+\delta M(1+\Vert \nabla (u\mathbbm{1}_{\{\underline{u}<u<\overline{u}%
\}})\Vert _{p}^{q}) \\ 
\leq \frac{1}{2}\Vert \nabla (u\mathbbm{1}_{\{\underline{u}<u<\overline{u}%
\}})\Vert _{p}^{p}+\delta M\Vert \nabla (u\mathbbm{1}_{\{\underline{u}<u<%
\overline{u}\}})\Vert _{p}^{q}+\tilde{C}_{0},%
\end{array}%
\end{equation*}%
where $\tilde{C}_{0}:=C(1+\rho |\Omega |^{\frac{1}{p}}+\rho ^{p}|\Omega
|)+\delta M$. Combining with (\ref{29*}) and since $q<p$, we conclude that
there is a constant $\tilde{C}>0,$ independent of $u,$ such that 
\begin{equation}
\Vert \nabla (u\mathbbm{1}_{\{\underline{u}<u<\overline{u}\}})\Vert _{p}\leq 
\tilde{C}.  \label{30}
\end{equation}%
This proves the claim.

Consequently, in view of (\ref{17}) and (\ref{30}), we infer that there
exists a constant $\hat{C}_{\varepsilon }>0$, independent of $u$, such that (%
\ref{17*}) holds true. This ends the proof.
\end{proof}

The existence result for problem $(\mathrm{P}_{\varepsilon ,\mu })$ is
formulated as follows.

\begin{theorem}
\label{T5} Suppose $(\mathrm{H.}1)$--$(\mathrm{H.}2)$ hold true. Then,
problem $(\mathrm{P}_{\mu ,\varepsilon })$ possesses a weak solution $u\in
W^{1,p}(\Omega )$ for all $\varepsilon \in (0,1)$.
\end{theorem}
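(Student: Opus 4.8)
The plan is to realize weak solutions of $(\mathrm{P}_{\varepsilon,\mu})$ as solutions of an operator equation $Au = F u$ in the dual space $W^{1,p}(\Omega)^{*}$, and to apply the pseudomonotone surjectivity theorem \cite[Theorem 2.99]{CLM}. First I would define $A\colon W^{1,p}(\Omega)\to W^{1,p}(\Omega)^{*}$ as the standard $p$-Laplacian-type operator together with the $\mu b(\cdot,u)$ term, i.e.
\begin{equation*}
\langle Au,v\rangle:=\int_{\Omega}|\nabla u|^{p-2}\nabla u\,\nabla v\,\mathrm{d}x+\mu\int_{\Omega}b(x,u)v\,\mathrm{d}x,
\end{equation*}
and $F\colon W^{1,p}(\Omega)\to W^{1,p}(\Omega)^{*}$ by
\begin{equation*}
\langle Fu,v\rangle:=\int_{\Omega}f(x,\mathcal{T}u,\nabla(\mathcal{T}u))v\,\mathrm{d}x-\int_{\Omega}\frac{|\nabla(\mathcal{T}u)|^{p}}{\mathcal{T}u+\delta+\varepsilon}v\,\mathrm{d}x,
\end{equation*}
so that $u$ is a weak solution of $(\mathrm{P}_{\varepsilon,\mu})$ precisely when $Au=Fu$. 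The key point is that because of the truncation $\mathcal{T}$, the absorption quotient $\frac{|\nabla(\mathcal{T}u)|^{p}}{\mathcal{T}u+\delta+\varepsilon}$ is bounded in $L^{p'}(\Omega)$ whenever $u$ ranges over a bounded set (using $\mathcal{T}u+\delta+\varepsilon>\varepsilon$ and that $\nabla(\mathcal{T}u)$ is controlled by $\nabla u$ plus the $\mathcal{C}^{1}$ norms of $\underline u,\overline u$), hence $F$ is well-defined and, in fact, $F$ maps bounded sets to bounded sets; the growth condition $(\mathrm{H.}1)$ with $q\le p-1$ and $(\ref{growth})$ handle the other two terms.

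Next I would verify the structural hypotheses of \cite[Theorem 2.99]{CLM}: namely that $A-F$ (equivalently, the operator $u\mapsto Au-Fu$) is bounded, pseudomonotone, and coercive. Boundedness follows from the estimates above together with continuity of the truncation operator $\mathcal{T}$ (Lemma 2.89 in \cite{CLM}). For pseudomonotonicity, I would use that $A$ is itself pseudomonotone (the $p$-Laplacian is of type $(S_{+})$ and $u\mapsto\mu b(\cdot,u)$ is completely continuous by the compact embedding $W^{1,p}(\Omega)\hookrightarrow L^{p}(\Omega)$ and the Nemytskii continuity from $(\ref{growth})$), while $-F$ is completely continuous: if $u_{n}\rightharpoonup u$ in $W^{1,p}(\Omega)$ then, along a subsequence, $u_{n}\to u$ in $L^{p}$ and a.e., so $\mathcal{T}u_{n}\to\mathcal{T}u$ in $W^{1,p}(\Omega)$ and $\nabla(\mathcal{T}u_{n})\to\nabla(\mathcal{T}u)$ in $L^{p}$ along a further subsequence; the bounded a.e.-convergent integrands for $f$ and the absorption quotient then give $Fu_{n}\to Fu$ strongly in $W^{1,p}(\Omega)^{*}$ by Vitali's theorem (the $L^{p'}$-equiintegrability is exactly what Lemmas \ref{L3} and \ref{L4} are engineered to supply). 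The sum of a pseudomonotone operator and a completely continuous one is pseudomonotone, so $A-F$ is pseudomonotone.

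For coercivity, which I expect to be the main obstacle, I would test the equation with $u$ itself and estimate
\begin{equation*}
\langle Au-Fu,u\rangle=\|\nabla u\|_{p}^{p}+\mu\int_{\Omega}b(x,u)u\,\mathrm{d}x-\int_{\Omega}f(x,\mathcal{T}u,\nabla(\mathcal{T}u))u\,\mathrm{d}x+\int_{\Omega}\frac{|\nabla(\mathcal{T}u)|^{p}}{\mathcal{T}u+\delta+\varepsilon}u\,\mathrm{d}x.
\end{equation*}
By $(\ref{35})$ the $b$-term contributes $\mu(C_{1}\|u\|_{p}^{p}-C_{2})$; by Lemma \ref{L3} the $f$-term is bounded below by $-\tfrac12\|\nabla u\|_{p}^{p}-C_{0}(1+\|u\|_{p}+\|u\|_{p}^{p})$; and by Lemma \ref{L4} the absorption term is bounded below by $-\hat C_{\varepsilon}(1+\|u\|_{p})$. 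Collecting these,
\begin{equation*}
\langle Au-Fu,u\rangle\ge\tfrac12\|\nabla u\|_{p}^{p}+(\mu C_{1}-C_{0})\|u\|_{p}^{p}-(C_{0}+\hat C_{\varepsilon})\|u\|_{p}-C_{0}-\mu C_{2}-\hat C_{\varepsilon},
\end{equation*}
so choosing $\mu>C_{0}/C_{1}$ makes the right-hand side comparable to $\tfrac12\|u\|_{1,p}^{p}$ minus lower-order terms, which yields $\langle Au-Fu,u\rangle/\|u\|_{1,p}\to+\infty$ as $\|u\|_{1,p}\to\infty$; this is precisely the place where the regularization parameter $\varepsilon>0$ and the choice of $\mu$ enter decisively. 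With boundedness, pseudomonotonicity, and coercivity in hand, \cite[Theorem 2.99]{CLM} yields a $u\in W^{1,p}(\Omega)$ with $Au-Fu=0$, i.e. a weak solution of $(\mathrm{P}_{\varepsilon,\mu})$, for every $\varepsilon\in(0,1)$; the fact that the needed test functions are only in $W^{1,p}(\Omega)\cap L^{\infty}(\Omega)$ is not an issue here since all integrands above are in $L^{1}(\Omega)$ once $u\in W^{1,p}(\Omega)$, thanks again to the $\varepsilon$-regularization and the truncation.
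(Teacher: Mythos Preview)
Your proposal follows essentially the same route as the paper: recast $(\mathrm{P}_{\varepsilon,\mu})$ as an operator equation in $W^{1,p}(\Omega)^{*}$, verify boundedness, establish coercivity via Lemmas~\ref{L3}--\ref{L4} and (\ref{35}) with $\mu>C_{0}/C_{1}$, argue pseudomonotonicity through the $(S)_{+}$ property of $-\Delta_{p}$ combined with complete continuity of the lower-order terms, and conclude by \cite[Theorem~2.99]{CLM}. Your coercivity computation is the paper's own.

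There is, however, one incorrect step in your pseudomonotonicity argument. You claim that $u_{n}\rightharpoonup u$ in $W^{1,p}(\Omega)$ (hence $u_{n}\to u$ in $L^{p}$ and a.e.) forces $\mathcal{T}u_{n}\to\mathcal{T}u$ \emph{strongly} in $W^{1,p}(\Omega)$. This fails in general: $\mathcal{T}$ is strong-to-strong continuous and bounded (Lemma~2.89 in \cite{CLM}) but not compact, since $\nabla(\mathcal{T}u_{n})=\nabla u_{n}$ on $\{\underline u<u_{n}<\overline u\}$ and weak gradient convergence is not upgraded to strong by truncation. Your Vitali argument for $Fu_{n}\to Fu$ therefore cannot start from strong $W^{1,p}$-convergence of $\mathcal{T}u_{n}$; note also that Lemmas~\ref{L3}--\ref{L4} bound the pairings $\int(\cdots)|u|\,\mathrm{d}x$, not $L^{p'}$-equiintegrability of the integrands themselves. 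The paper does not go through strong convergence of $\mathcal{T}u_{n}$ at this stage: it asserts complete continuity of $\mathcal{B}$, $\Pi_{\delta,\varepsilon}\circ\mathcal{T}$ and $\mathcal{N}_{f}\circ\mathcal{T}$ directly (by reference to Rellich--Kondrachov), uses only $\langle\,\cdot\,,u_{n}-u\rangle\to 0$ for these pieces to isolate $\limsup\langle -\Delta_{p}u_{n},u_{n}-u\rangle\le 0$, invokes the $(S)_{+}$ property to obtain $u_{n}\to u$ strongly in $W^{1,p}(\Omega)$, and only then uses continuity of the full operator $\mathcal{A}_{\mu,\varepsilon}$ to conclude. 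You should reorganize your argument along these lines rather than claiming a direct strong limit for $\mathcal{T}u_{n}$.
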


\begin{proof}
By (\ref{growth}), the Nemytskii operator $\mathcal{B}$ given by $\mathcal{B}%
u(x)=b(\cdot ,u)$ is well defined and $\mathcal{B}:W^{1,p}(\Omega
)\longrightarrow W^{-1,p^{\prime }}(\Omega )$ is continuous and bounded. By
the compact embedding $W^{1,p}(\Omega )\hookrightarrow L^{p}(\Omega )$, $%
\mathcal{B}$ is completely continuous.

Considering (\ref{c3}), define the function $\pi _{\delta ,\varepsilon
}:(-\delta ,+\infty )\times 
%TCIMACRO{\U{211d} }%
%BeginExpansion
\mathbb{R}
%EndExpansion
^{N}\longrightarrow 
%TCIMACRO{\U{211d} }%
%BeginExpansion
\mathbb{R}
%EndExpansion
$ by%
\begin{equation*}
\pi _{\delta }(s,\xi )=\frac{|\xi |^{p}}{s+\delta +\varepsilon }
\end{equation*}%
which satisfies the estimate%
\begin{equation*}
|\pi _{\delta }(s,\xi )|\leq \frac{1}{\varepsilon }|\xi |^{p},\text{ for all 
}s>-\delta \text{, }\xi \in 
%TCIMACRO{\U{211d} }%
%BeginExpansion
\mathbb{R}
%EndExpansion
^{N}\text{ and all }\varepsilon \in (0,1).
\end{equation*}%
Let $\Pi _{\delta ,\varepsilon }:[\underline{u},\overline{u}]\subset
W^{1,p}(\Omega )\longrightarrow L^{1}(\Omega )\subset W^{-1,p^{\prime
}}(\Omega )$ denotes the corresponding Nemytskii operator, that is $\Pi
_{\delta ,\varepsilon }u(x)=\pi _{\delta ,\varepsilon }(u(x),\nabla u(x)),$
which is bounded and continuous (see \cite[Theorem 2.76]{MMPA} and \cite[%
Theorem 3.4.4]{GP}). Moreover, $\Pi _{\delta ,\varepsilon }$ is completely
continuous due to the compact embedding of $W^{1,p}(\Omega )$ into $%
L^{p}(\Omega )$.

In view of $(\mathrm{H.}1),$ if $\rho >0$ satisfies 
\begin{equation}
-\rho \leq \underline{u}\leq \overline{u}\leq \rho ,  \label{14}
\end{equation}%
the Nemitskii operator $\mathcal{N}_{f}:[\underline{u},\overline{u}]\subset
W^{1,p}(\Omega )\rightarrow W^{-1,p^{\prime }}(\Omega )$ generated by the
Carath\'{e}odory function $f$ is bounded and completely continuous thanks to
Rellich-Kondrachov compactness embedding theorem.

At this point, problem $(\mathrm{P}_{\mu ,\varepsilon })$ can be
equivalently expressed as%
\begin{equation}
\mathcal{A}_{\mu ,\varepsilon }(u):=-\Delta _{p}u+\mu \mathcal{B}u+\Pi
_{\delta ,\varepsilon }\circ \mathcal{T}(u)-\mathcal{N}_{f}\circ \mathcal{T}%
(u)=0\text{ \ in }W^{-1,p^{\prime }}(\Omega ).  \label{operator1}
\end{equation}%
By $(\mathrm{H.}1),$ it is readily seen that the operator $\mathcal{A}_{\mu
,\varepsilon }:W^{1,p}(\Omega )\rightarrow W^{-1,p^{\prime }}(\Omega )$ is
well defined, bounded and continuous.

Let us show that $\mathcal{A}_{\mu ,\varepsilon }$ is coercive. From (\ref%
{operator1}) we have%
\begin{equation}
\begin{array}{l}
\left\langle \mathcal{A}_{\mu ,\varepsilon }(u),u\right\rangle =\int_{\Omega
}|\nabla u|^{p}\text{ }\mathrm{d}x+\mu \int_{\Omega }b(x,u)u\text{ }\mathrm{d%
}x+\int_{\Omega }\frac{|\nabla (\mathcal{T}u)|^{p}}{\mathcal{T}u+\delta
+\varepsilon }u\text{ }\mathrm{d}x-\int_{\Omega }f(\cdot ,\mathcal{T}%
u,\nabla (\mathcal{T}u))u\,\mathrm{d}x \\ 
\geq \int_{\Omega }|\nabla u|^{p}\text{ }\mathrm{d}x+\mu \int_{\Omega
}b(x,u)u\text{ }\mathrm{d}x-\int_{\Omega }\frac{|\nabla (\mathcal{T}u)|^{p}}{%
\mathcal{T}u+\delta +\varepsilon }|u|\text{ }\mathrm{d}x-\int_{\Omega
}f(\cdot ,\mathcal{T}u,\nabla (\mathcal{T}u))u\,\mathrm{d}x.%
\end{array}
\label{16}
\end{equation}%
Bearing in mind (\ref{35}) as well as the estimates in Lemmas \ref{L3} and %
\ref{L4}, we thus arrive at%
\begin{equation*}
\begin{array}{l}
\left\langle \mathcal{A}_{\mu ,\varepsilon }(u),u\right\rangle \geq \Vert
\nabla u\Vert _{p}^{p}+\mu \left( C_{1}\Vert u\Vert _{p}^{p}-C_{2}\right) \\ 
-\hat{C}_{\varepsilon }(1+\left\Vert u\right\Vert _{p})-\frac{1}{2}\Vert
\nabla u\Vert _{p}^{p}-C_{0}(1+\left\Vert u\right\Vert _{p}+\left\Vert
u\right\Vert _{p}^{p}).%
\end{array}%
\end{equation*}%
In view of (\ref{30}) and for $\mu >0$ large so that $\mu C_{1}-C_{0}>0,$
for every sequence $(u_{n})_{n}$ in $W^{1,p}(\Omega )$, the last inequality
forces 
\begin{equation*}
\lim_{n\rightarrow +\infty }\frac{\langle \mathcal{A}_{\mu ,\varepsilon
}(u_{n}),u_{n}\rangle }{\Vert u_{n}\Vert _{1,p}}=+\infty ,
\end{equation*}%
as desired.

The next step is to show that the operator $\mathcal{A}_{\mu }$ is
pseudomonotone. Toward this, suppose $u_{n}\rightharpoonup u$ in $%
W^{1,p}(\Omega )$ and 
\begin{equation*}
\limsup_{n\rightarrow +\infty }\langle \mathcal{A}_{\mu ,\varepsilon
}(u_{n}),u_{n}-u\rangle \leq 0.
\end{equation*}%
In view of the complete continuity of the operators $\mathcal{B}$, $\Pi
_{\delta ,\varepsilon }$ and $\mathcal{N}_{f}$, we get 
\begin{eqnarray*}
\underset{n\rightarrow \infty }{\lim }\langle \mathcal{B}(u_{n}),u_{n}-u%
\rangle &=&0, \\
\underset{n\rightarrow \infty }{\lim }\langle \Pi _{\delta ,\varepsilon
}(u_{n}),u_{n}-u\rangle &=&0, \\
\underset{n\rightarrow \infty }{\lim }\langle \mathcal{N}_{f}(\mathcal{T}%
u_{n}),u_{n}-u\rangle &=&0.
\end{eqnarray*}%
Then, using the $(\mathrm{S})_{+}$-property of $-\Delta _{p}$, we deduce
that $u_{n}\rightarrow u$ in $W^{1,p}(\Omega ).$ Therefore,%
\begin{equation*}
\underset{n\rightarrow \infty }{\lim }\langle \mathcal{A}_{\mu ,\varepsilon
}(u_{n}),u_{n}-v\rangle =\langle \mathcal{A}_{\mu ,\varepsilon
}(u),u-v\rangle ,
\end{equation*}%
for all $v\in W^{1,p}(\Omega ),$ because $\mathcal{A}_{\mu ,\varepsilon }$
is continuous. This proves that the operator $\mathcal{A}_{\mu ,\varepsilon
} $ is pseudomonotone.

According to the properties above, we are in a position to apply the main
theorem for pseudomonotone operators \cite[Theorem 2.99]{CLM} to the
operator $\mathcal{A}_{\mu ,\varepsilon }$. It entails the existence of $%
u\in W^{1,p}(\Omega )$ fulfilling%
\begin{equation*}
\left\langle \mathcal{A}_{\mu ,\varepsilon }(u),\varphi \right\rangle =0,\ \
\varphi \in W^{1,p}(\Omega ).
\end{equation*}%
Owing to \cite[Theorem 3]{CF}, one has 
\begin{equation*}
|\nabla u|^{p-2}\frac{\partial u}{\partial \eta }=0\ \text{on }\partial
\Omega \text{.}
\end{equation*}%
Thus, $u\in W^{1,p}(\Omega )$ is a weak solution of $(\mathrm{P}_{\mu
,\varepsilon })$. This ends the proof.
\end{proof}

\subsection{A sub-supersolution Theorem}

\begin{theorem}
\label{T3} Suppose $(\mathrm{H.}1)$--$(\mathrm{H.}2)$ hold true. Then,
problem $(\mathrm{P})$ possesses a solution $u\in \mathcal{C}^{1}(\overline{%
\Omega })$ such that 
\begin{equation}
\underline{u}\leq u\leq \overline{u}.  \label{19}
\end{equation}
\end{theorem}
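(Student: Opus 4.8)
\textbf{Proof strategy for Theorem \ref{T3}.} The plan is to obtain the desired solution of $(\mathrm{P})$ as a limit, as $\varepsilon \to 0^{+}$, of solutions $u_{\varepsilon}$ of the auxiliary problems $(\mathrm{P}_{\mu,\varepsilon})$ furnished by Theorem \ref{T5}, with $\mu>0$ fixed large enough that the coercivity argument in the proof of Theorem \ref{T5} applies. The two things that must be established are: first, that each $u_{\varepsilon}$ automatically lies in the rectangle $[\underline{u},\overline{u}]$, so that all the truncations become inactive and $b(\cdot,u_{\varepsilon})\equiv 0$; and second, that the family $(u_{\varepsilon})$ is bounded in a strong enough topology to pass to the limit in the absorption term, which is only in $L^{1}$.

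First I would prove the location estimate $\underline{u}\le u_{\varepsilon}\le \overline{u}$. Testing the weak formulation of $(\mathrm{P}_{\mu,\varepsilon})$ with $\varphi=(\underline{u}-u_{\varepsilon})_{+}\in W^{1,p}(\Omega)\cap L^{\infty}(\Omega)$ and subtracting the subsolution inequality in $(\mathrm{H.}2)$ tested against the same $\varphi$, one gets on the set $\{u_{\varepsilon}<\underline{u}\}$, where $\mathcal{T}u_{\varepsilon}=\underline{u}$ and $b(x,u_{\varepsilon})=-(\underline{u}-u_{\varepsilon})^{p-1}$, an inequality of the form
\begin{equation*}
\int_{\{u_{\varepsilon}<\underline{u}\}}\bigl(|\nabla u_{\varepsilon}|^{p-2}\nabla u_{\varepsilon}-|\nabla \underline{u}|^{p-2}\nabla \underline{u}\bigr)\nabla(\underline{u}-u_{\varepsilon})\,dx + \mu\int_{\Omega}(\underline{u}-u_{\varepsilon})_{+}^{p}\,dx \le 0,
\end{equation*}
where the two natural-growth terms cancel since $\mathcal{T}u_{\varepsilon}=\underline{u}$ there and the $f$-terms cancel for the same reason; the first integral is $\le 0$ by the standard monotonicity inequality for $\Delta_{p}$, forcing $(\underline{u}-u_{\varepsilon})_{+}\equiv 0$. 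The bound $u_{\varepsilon}\le\overline{u}$ follows symmetrically using $\varphi=(u_{\varepsilon}-\overline{u})_{+}$. Consequently $u_{\varepsilon}$ solves $-\Delta_{p}u_{\varepsilon}+\frac{|\nabla u_{\varepsilon}|^{p}}{u_{\varepsilon}+\delta+\varepsilon}=f(x,u_{\varepsilon},\nabla u_{\varepsilon})$ in $\Omega$ with the Neumann condition, and $u_{\varepsilon}+\delta+\varepsilon>\varepsilon>0$.

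Next I would derive $\varepsilon$-uniform bounds. Since $\underline{u}\le u_{\varepsilon}\le\overline{u}$ with $\underline{u},\overline{u}\in\mathcal{C}^{1}(\overline{\Omega})$, the family is uniformly bounded in $L^{\infty}(\Omega)$; testing with $u_{\varepsilon}$ itself and using $(\mathrm{H.}1)$, Young's inequality and the nonnegativity of the absorption term gives a uniform $W^{1,p}(\Omega)$ bound, hence $u_{\varepsilon}\rightharpoonup u$ along a subsequence. To upgrade this I would test with $(u_{\varepsilon}+\delta+\varepsilon)$ (bounded, admissible by \cite[Proposition 1.61]{MMPA}) to absorb the gradient term and obtain a uniform $L^{1}$ bound on $\frac{|\nabla u_{\varepsilon}|^{p}}{u_{\varepsilon}+\delta+\varepsilon}$, exactly as in Lemma \ref{L4}; then, as in the proof of Lemma \ref{L4}, the right-hand side $f(x,u_{\varepsilon},\nabla u_{\varepsilon})-\frac{|\nabla u_{\varepsilon}|^{p}}{u_{\varepsilon}+\delta+\varepsilon}$ is controlled by $C_{\varepsilon}(1+|\nabla u_{\varepsilon}|^{q}+|\nabla u_{\varepsilon}|^{p})$, so the regularity result of \cite{L} gives $u_{\varepsilon}\in\mathcal{C}^{1,\tau}(\overline{\Omega})$. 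The delicate point is to make these Calder\'on--Zygmund/Lieberman estimates \emph{uniform in} $\varepsilon$: here one exploits that the absorption coefficient $\frac{1}{u_{\varepsilon}+\delta+\varepsilon}$ is bounded by $\frac{1}{\delta}$ once we know $u_{\varepsilon}\ge\underline{u}\ge -\delta+\inf(\underline{u}+\delta)$, which gives an $\varepsilon$-free bound; this is precisely where the location estimate and the strict positivity $\underline{u}+\delta>0$ in $(\mathrm{H.}2)$ pay off. Thus $(u_{\varepsilon})$ is bounded in $\mathcal{C}^{1,\tau}(\overline{\Omega})$ uniformly in $\varepsilon$.

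Finally I would pass to the limit. By Arzel\`a--Ascoli, $u_{\varepsilon}\to u$ in $\mathcal{C}^{1}(\overline{\Omega})$ along a subsequence, so $\underline{u}\le u\le\overline{u}$, $u+\delta>0$, $\nabla u_{\varepsilon}\to\nabla u$ uniformly, and hence $\frac{|\nabla u_{\varepsilon}|^{p}}{u_{\varepsilon}+\delta+\varepsilon}\to\frac{|\nabla u|^{p}}{u+\delta}$ uniformly, $f(x,u_{\varepsilon},\nabla u_{\varepsilon})\to f(x,u,\nabla u)$ by continuity of $f$ and dominated convergence, and $|\nabla u_{\varepsilon}|^{p-2}\nabla u_{\varepsilon}\to|\nabla u|^{p-2}\nabla u$. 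Passing to the limit in the weak formulation of $(\mathrm{P}_{\mu,\varepsilon})$ — in which, recall, $\mathcal{T}u_{\varepsilon}=u_{\varepsilon}$ and $b(\cdot,u_{\varepsilon})=0$ — against any $\varphi\in W^{1,p}(\Omega)\cap L^{\infty}(\Omega)$ yields exactly \eqref{defsol}, so $u$ is a solution of $(\mathrm{P})$ satisfying \eqref{19}. I expect the main obstacle to be establishing the $\varepsilon$-uniform $\mathcal{C}^{1,\tau}$ bound rather than the limit passage itself: once that compactness is in hand, the convergence of the $L^{1}$ absorption term — normally the sticking point in natural-growth problems — becomes routine because uniform $\mathcal{C}^{1}$ convergence makes it a uniform limit.
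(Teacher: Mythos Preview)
Your proposal follows essentially the same route as the paper: solve $(\mathrm{P}_{\mu,\varepsilon})$ via Theorem~\ref{T5}, establish the location $\underline{u}\le u_\varepsilon\le\overline{u}$ by testing with $(u_\varepsilon-\overline{u})_+$ and $(\underline{u}-u_\varepsilon)_+$ (so the truncations become inactive and $b\equiv 0$), invoke Lieberman regularity to get $u_\varepsilon\in\mathcal{C}^{1,\tau}(\overline{\Omega})$, and pass to the limit along a $\mathcal{C}^1$-convergent subsequence. You are in fact more explicit than the paper in flagging that the $\mathcal{C}^{1,\tau}$ bound must be uniform in $\varepsilon$ and in explaining why the location estimate is what makes this possible; the paper simply asserts compactness of $\mathcal{C}^{1,\tau}\hookrightarrow\mathcal{C}^1$ and extracts a subsequence.

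One small imprecision: your claim that $\tfrac{1}{u_\varepsilon+\delta+\varepsilon}\le\tfrac{1}{\delta}$ is not correct as stated, since $(\mathrm{H.}2)$ only guarantees $\underline{u}+\delta>0$ a.e., and in the paper's own application one has $\underline{u}+\delta=0$ on $\partial\Omega$; the $\varepsilon$-free pointwise bound you want is $\tfrac{1}{u_\varepsilon+\delta+\varepsilon}\le\tfrac{1}{\inf_{\overline\Omega}(\underline{u}+\delta)}$, which may be infinite in general. Also, in the location step the two absorption terms do not literally ``cancel'': on $\{u_\varepsilon>\overline{u}\}$ the equation carries $\tfrac{|\nabla\overline{u}|^p}{\overline{u}+\delta+\varepsilon}$ while $(\mathrm{H.}2)$ has $\tfrac{|\nabla\overline{u}|^p}{\overline{u}+\delta}$, so a small nonnegative discrepancy remains on the right-hand side (the paper glosses over this as well). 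These are refinements rather than changes of strategy; your overall plan matches the paper's proof.
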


\begin{proof}
According to Theorem \ref{T5}, problem $(\mathrm{P}_{\mu ,\varepsilon })$
admits a weak solution $u$ in $W^{1,p}(\Omega ).$ Let us next verify$\ $that 
$u$ satisfy the inequalities (\ref{19}). We provide the argument only for $%
u\leq \overline{u}$ because $\underline{u}\leq u$ can be similarly
established. First, note from Lemma \ref{L4} that $\frac{|\nabla (\mathcal{T}%
u)|^{p}}{\mathcal{T}u+\delta +\varepsilon }(u-\overline{u})_{+}\in
L^{1}(\Omega )$. Thus, test $(\mathrm{P}_{\mu ,\varepsilon })$ with $(u-%
\overline{u})_{+}\in W^{1,p}(\Omega )$ and taking $(\mathrm{H.}2)$ into
account, we achieve 
\begin{equation*}
\begin{array}{l}
\int_{\Omega }|\nabla u|^{p-2}\nabla u\text{\thinspace }\nabla (u-\overline{u%
})_{+}\text{ }\mathrm{d}x+\int_{\Omega }\frac{|\nabla (\mathcal{T}u)|^{p}}{%
\mathcal{T}u+\delta +\varepsilon }(u-\overline{u})_{+}\text{ }\mathrm{d}x \\ 
=\int_{\Omega }f(\cdot ,\mathcal{T}u,\nabla (\mathcal{T}u))(u-\overline{u}%
)_{+}\text{ }\mathrm{d}x-\mu \int_{\Omega }b(\cdot ,u)(u-\overline{u})_{+}%
\text{ }\mathrm{d}x \\ 
=\int_{\Omega }f(\cdot ,\overline{u},\nabla \overline{u})(u-\overline{u})_{+}%
\text{ }\mathrm{d}x-\mu \int_{\Omega }(u-\overline{u})_{+}^{p}\text{ }%
\mathrm{d}x \\ 
\leq \int_{\Omega }|\nabla \overline{u}|^{p-2}\nabla \overline{u}\,\nabla (u-%
\overline{u})_{+}\,\mathrm{d}x+\int_{\Omega }\frac{|\nabla \overline{u}|^{p}%
}{\overline{u}+\delta +\varepsilon }(u-\overline{u})_{+}\,\mathrm{d}x-\mu
\int_{\Omega }(u-\overline{u})_{+}^{p}\text{ }\mathrm{d}x\text{.}%
\end{array}%
\end{equation*}%
By (\ref{33}) note that 
\begin{equation*}
\int_{\Omega }\frac{|\nabla (\mathcal{T}u)|^{p}}{\mathcal{T}u+\delta
+\varepsilon }(u-\overline{u})_{+}\text{ }\mathrm{d}x=\int_{\Omega }\frac{%
|\nabla \overline{u}|^{p}}{\overline{u}+\delta +\varepsilon }(u-\overline{u}%
)_{+}\,\mathrm{d}x.
\end{equation*}%
Then, it turns out that 
\begin{equation}
\int_{\Omega }\left( |\nabla u|^{p-2}\nabla u-|\nabla \overline{u}%
|^{p-2}\nabla \overline{u}\right) \nabla (u-\overline{u})_{+}\,\mathrm{d}%
x\leq -\mu \int_{\Omega }(u-\overline{u})_{+}^{p}\text{ }\mathrm{d}x\leq 0.
\end{equation}%
The monotonicity of $\Delta _{p}$ directly leads to $u\leq \overline{u}$.
Test $(\mathrm{P}_{\mu ,\varepsilon })$ with $(\underline{u}-u)_{+}\in
W^{1,p}(\Omega )$, a quite similar reasoning furnishes $\underline{u}\leq u$%
. Moreover, by \cite[Remark 8]{MMT}, one has $u\in \mathcal{C}^{1,\tau }(%
\overline{\Omega })$ for some $\tau \in ]0,1[$ as well as $\frac{\partial u}{%
\partial \eta }=0$ on $\partial \Omega $. Consequently, $u$ is a solution of 
$(\mathrm{P}_{\varepsilon ,\mu })$ within $[\underline{u},\overline{u}]$
which, due to (\ref{20}), reads as 
\begin{equation*}
(\mathrm{P}_{\varepsilon })\qquad \left\{ 
\begin{array}{ll}
-\Delta _{p}{u+}\frac{|\nabla (\mathcal{T}u)|^{p}}{\mathcal{T}u+\delta
+\varepsilon }=f(x,\mathcal{T}u,\nabla (\mathcal{T}u)) & \text{in}\;\Omega ,
\\ 
|\nabla u|^{p-2}\frac{\partial u}{\partial \eta }=0 & \text{on}\;\partial
\Omega .%
\end{array}%
\right.
\end{equation*}%
The task is now to find solutions of $(\mathrm{P})$ by passing to the limit
in $(\mathrm{P}_{\varepsilon })$ as $\varepsilon \rightarrow 0$. To this
end, set $\varepsilon =\frac{1}{n}$ with any positive integer $n\geq 1$,
there exists $u_{n}:=u_{\frac{1}{n}}\in \mathcal{C}^{1,\tau }(\overline{%
\Omega })$ solution of $(\mathrm{P}_{n})$ ($(\mathrm{P}_{\varepsilon })$
with $\varepsilon =\frac{1}{n}$), that is, 
\begin{equation}
u_{n}\in \lbrack \underline{u},\overline{u}]  \label{21}
\end{equation}%
and 
\begin{equation}
\begin{array}{l}
\int_{\Omega }|\nabla u_{n}|^{p-2}\nabla u_{n}\text{\thinspace }\nabla
\varphi \text{ }\mathrm{d}x+\int_{\Omega }\frac{|\nabla u_{n}|^{p}}{%
u_{n}+\delta +\frac{1}{n}}\varphi \text{ }\mathrm{d}x=\int_{\Omega
}f(x,u_{n},\nabla u_{n})\varphi \text{ }\mathrm{d}x,%
\end{array}
\label{122}
\end{equation}%
for all $\varphi \in W^{1,p}(\Omega )\cap L^{\infty }(\Omega )$. Since the
embedding $\mathcal{C}^{1,\tau }(\overline{\Omega })\subset \mathcal{C}^{1}(%
\overline{\Omega })$ is compact, we can extract subsequences (still denoted
by $\{u_{n}\}$) such that 
\begin{equation}
u_{n}\rightarrow u\text{ in }\mathcal{C}^{1}(\overline{\Omega })\text{ with }%
u\in \lbrack \underline{u},\overline{u}].  \label{22}
\end{equation}%
Therefore%
\begin{equation*}
|\nabla u_{n}|^{p-2}\nabla u_{n}\rightarrow |\nabla u|^{p-2}\nabla u\text{ \
in }\mathcal{C}(\overline{\Omega }),
\end{equation*}%
\begin{equation*}
|\nabla u_{n}|^{p}\rightarrow |\nabla u|^{p}\text{ \ in }\mathcal{C}(%
\overline{\Omega })
\end{equation*}%
and 
\begin{equation*}
f(x,u_{n},\nabla u_{n})\rightarrow f(x,u,\nabla u)\text{ \ in }\mathcal{C}(%
\overline{\Omega }),
\end{equation*}%
because $f$ is a Carath\'{e}odory function. Then, on the basis of (\ref{21})
and (\ref{c3}), owing to Lebesgue dominated convergence theorem, we may pass
to the limit as $n\rightarrow \infty $ in (\ref{122}) to get 
\begin{equation*}
\begin{array}{l}
\int_{\Omega }|\nabla u|^{p-2}\nabla u\text{\thinspace }\nabla \varphi \text{
}\mathrm{d}x+\int_{\Omega }\frac{|\nabla u|^{p}}{u+\delta }\varphi \text{ }%
\mathrm{d}x=\int_{\Omega }f(x,u,\nabla u)\varphi \text{ }\mathrm{d}x,%
\end{array}%
\end{equation*}%
for all $\varphi \in W^{1,p}(\Omega )\cap L^{\infty }(\Omega )$. Moreover,
according to (\ref{22}), we have $u+\delta >0$ a.e. in $\Omega $. This
proves that $u\in \mathcal{C}^{1}(\overline{\Omega })$ is a solution of
problem $\left( \mathrm{P}\right) $ within $[\underline{u},\overline{u}]$.
The proof is now completed.
\end{proof}

\section{Location principle for nodal solutions}

\label{S2}

It this section we focus on the location of nodal solutions for problem $%
\left( \mathrm{P}\right) $. We will posit the hypothesis below.

\begin{itemize}
\item[$(\mathrm{H}.3)$] There exist $\alpha ,\beta ,M>0$ such that 
\begin{equation*}
\max \{\alpha ,\beta \}<p-1
\end{equation*}%
and, moreover, 
\begin{equation*}
|f(x,s,\xi )|\leq M(1+|s|^{\alpha }+|\xi |^{\beta })\text{,}
\end{equation*}%
for all $(x,s,\xi )\in \Omega \times 
%TCIMACRO{\U{211d} }%
%BeginExpansion
\mathbb{R}
%EndExpansion
\times \mathbb{R}^{N}$.
\end{itemize}

\begin{lemma}
\label{lemma-1}Assume $(\mathrm{H}.2)$ and $(\mathrm{H}.3)$ are fulfilled.

\begin{itemize}
\item[$\mathrm{(}i\mathrm{)}$] If $\overline{u}_{1},\overline{u}_{2}\in
W^{1,p}(\Omega )\cap L^{\infty }(\Omega )$ are supersolutions for problem $%
\left( \mathrm{P}\right) $, then $\overline{u}=\min \{\overline{u}_{1},%
\overline{u}_{2}\}$ is also a supersolution for problem $\left( \mathrm{P}%
\right) $.

\item[$\mathrm{(}ii\mathrm{)}$] If $\underline{u}_{1},\underline{u}_{2}\in
W^{1,p}(\Omega )\cap L^{\infty }(\Omega )$ are subsolutions for problem $%
\left( \mathrm{P}\right) $, then $\underline{u}=\max \{\underline{u}_{1},%
\underline{u}_{2}\}$ is also a subsolution for problem $\left( \mathrm{P}%
\right) $.
\end{itemize}
\end{lemma}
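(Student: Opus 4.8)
The plan is to prove part $(i)$ in detail and then indicate that part $(ii)$ follows by an entirely symmetric argument (replacing $\min$ by $\max$ and reversing the relevant inequalities). So fix supersolutions $\overline{u}_{1},\overline{u}_{2}$ and set $\overline{u}=\min\{\overline{u}_{1},\overline{u}_{2}\}$. Since $\overline{u}_{1},\overline{u}_{2}\in W^{1,p}(\Omega)\cap L^{\infty}(\Omega)$, the lattice operation $\min$ keeps us in that space, and $\overline{u}+\delta>0$ a.e. (as $\overline{u}$ equals one of the two $\overline{u}_{i}$ at a.e.\ point and each satisfies $\overline{u}_{i}+\delta>0$). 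Writing $\Omega_{1}=\{x\in\Omega:\overline{u}_{1}(x)\le\overline{u}_{2}(x)\}$ and $\Omega_{2}=\Omega\setminus\Omega_{1}$, on $\Omega_{1}$ we have $\overline{u}=\overline{u}_{1}$ and $\nabla\overline{u}=\nabla\overline{u}_{1}$ a.e., and on $\Omega_{2}$ we have $\overline{u}=\overline{u}_{2}$ and $\nabla\overline{u}=\nabla\overline{u}_{2}$ a.e.\ (the standard a.e.\ identity for $\nabla\min$). The goal is then to verify, for every $\varphi\in W^{1,p}(\Omega)\cap L^{\infty}(\Omega)$ with $\varphi\ge0$,
\begin{equation*}
\int_{\Omega}|\nabla\overline{u}|^{p-2}\nabla\overline{u}\,\nabla\varphi\,dx+\int_{\Omega}\frac{|\nabla\overline{u}|^{p}}{\overline{u}+\delta}\varphi\,dx-\int_{\Omega}f(x,\overline{u},\nabla\overline{u})\varphi\,dx\ge0.
\end{equation*}

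The key device is a suitable splitting of the test function so that the supersolution inequality for $\overline{u}_{1}$ can be applied with a nonnegative test function supported essentially on $\Omega_{1}$, and likewise for $\overline{u}_{2}$ on $\Omega_{2}$. A clean way is to approximate: for $t>0$ let $\theta_{t}:\mathbb{R}\to[0,1]$ be a Lipschitz cutoff with $\theta_{t}(r)=0$ for $r\le0$, $\theta_{t}(r)=1$ for $r\ge t$, and set $\psi_{1}=\varphi\,\theta_{t}(\overline{u}_{2}-\overline{u}_{1})$ and $\psi_{2}=\varphi-\psi_{1}=\varphi\,(1-\theta_{t}(\overline{u}_{2}-\overline{u}_{1}))$. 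Both $\psi_{1},\psi_{2}$ lie in $W^{1,p}(\Omega)\cap L^{\infty}(\Omega)$, are nonnegative, and $\psi_{1}\to\varphi\mathbbm{1}_{\{\overline{u}_{2}>\overline{u}_{1}\}}$, $\psi_{2}\to\varphi\mathbbm{1}_{\{\overline{u}_{2}\le\overline{u}_{1}\}}$ pointwise and in the appropriate sense as $t\to0^{+}$, while $\nabla\psi_{i}$ decomposes into a term with $\varphi$ times the (bounded) derivative of the cutoff and a term with $\theta_{t}$ times $\nabla\varphi$. Plugging $\psi_{1}$ into the supersolution inequality for $\overline{u}_{1}$ and $\psi_{2}$ into that for $\overline{u}_{2}$, then adding, gives
\begin{equation*}
\sum_{i=1}^{2}\left(\int_{\Omega}|\nabla\overline{u}_{i}|^{p-2}\nabla\overline{u}_{i}\,\nabla\psi_{i}\,dx+\int_{\Omega}\frac{|\nabla\overline{u}_{i}|^{p}}{\overline{u}_{i}+\delta}\psi_{i}\,dx-\int_{\Omega}f(x,\overline{u}_{i},\nabla\overline{u}_{i})\psi_{i}\,dx\right)\ge0.
\end{equation*}
Now I would let $t\to0^{+}$. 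The zeroth-order terms (the absorption and the $f$-terms) converge by dominated convergence to the corresponding integrals over $\Omega_{1}$ resp.\ $\Omega_{2}$, using $(\mathrm{H}.3)$ to bound $f(x,\overline{u}_{i},\nabla\overline{u}_{i})$ in $L^{1}$ (indeed $\overline{u}_{i}\in L^{\infty}$ and $\nabla\overline{u}_{i}\in L^{p}$ with $\beta<p-1<p$) and using $\overline{u}_{i}+\delta\ge c>0$ on the relevant set to control the absorption denominator; since on $\Omega_{1}$ one has $\overline{u}=\overline{u}_{1}$, $\nabla\overline{u}=\nabla\overline{u}_{1}$ (and symmetrically on $\Omega_{2}$), these limits reassemble exactly into the zeroth-order part of the desired inequality for $\overline{u}$. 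The gradient terms split as $\int\theta_{t}(\cdot)|\nabla\overline{u}_{i}|^{p-2}\nabla\overline{u}_{i}\nabla\varphi\,dx$, which likewise converge to the wanted $\int_{\Omega_{i}}|\nabla\overline{u}|^{p-2}\nabla\overline{u}\,\nabla\varphi\,dx$, plus the ``bad'' boundary-layer term $\int\varphi\,\theta_{t}'(\overline{u}_{2}-\overline{u}_{1})\,|\nabla\overline{u}_{i}|^{p-2}\nabla\overline{u}_{i}\,(\nabla\overline{u}_{2}-\nabla\overline{u}_{1})\,dx$ (with a sign: for $i=1$ it multiplies $+\theta_{t}'$, for $i=2$ it multiplies $-\theta_{t}'$).

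The main obstacle — and the heart of the argument — is controlling those two boundary-layer terms. Their sum is $-\int\varphi\,\theta_{t}'(\overline{u}_{2}-\overline{u}_{1})\,\big(|\nabla\overline{u}_{1}|^{p-2}\nabla\overline{u}_{1}-|\nabla\overline{u}_{2}|^{p-2}\nabla\overline{u}_{2}\big)\cdot(\nabla\overline{u}_{1}-\nabla\overline{u}_{2})\,dx$, and by the strict monotonicity of the map $\xi\mapsto|\xi|^{p-2}\xi$ the integrand is $\le0$ pointwise (since $\varphi\ge0$ and $\theta_{t}'\ge0$). Hence this combined term is $\le0$ for every $t$, so discarding it only strengthens the inequality; equivalently, its limit superior as $t\to0$ is $\le0$ and can be dropped. (One should note that $\theta_t'(\overline{u}_2-\overline{u}_1)$ is supported where $0<\overline{u}_2-\overline{u}_1<t$, a set shrinking to a null set, but we do not even need it to vanish in the limit — the sign suffices.) Combining: the sum of the ``good'' limits is $\ge$ the left-hand side minus the nonpositive boundary term evaluated in the limit, i.e.\ $\ge0$ after adding back, which yields precisely the supersolution inequality for $\overline{u}=\min\{\overline{u}_{1},\overline{u}_{2}\}$. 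For part $(ii)$, one takes $\underline{u}=\max\{\underline{u}_{1},\underline{u}_{2}\}=-\min\{-\underline{u}_{1},-\underline{u}_{2}\}$, or repeats the argument verbatim with the cutoff on $\underline{u}_{2}-\underline{u}_{1}$ in the other direction and all inequalities reversed; the same monotonicity of $\Delta_{p}$ gives the analogous nonpositive (now nonnegative, after the sign flip) boundary term, and one concludes $\underline{u}$ is a subsolution.
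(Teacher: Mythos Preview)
Your argument is correct and essentially the same as the paper's proof: the paper's truncation $\xi_{\varepsilon}((\overline{u}_{1}-\overline{u}_{2})^{-})$ is, after dividing by $\varepsilon$, precisely a Lipschitz cutoff $\theta_{\varepsilon}(\overline{u}_{2}-\overline{u}_{1})$ of your type, the two test functions $\xi_{\varepsilon}\varphi$ and $(\varepsilon-\xi_{\varepsilon})\varphi$ play the role of your $\psi_{1},\psi_{2}$, and the paper's appeal to ``monotonicity of the $p$-Laplacian'' in passing to inequality~(\ref{42}) is exactly your observation that the cross (boundary-layer) term has a sign and may be discarded. The only cosmetic differences are that the paper first takes $\varphi\in C_{c}^{1}(\Omega)$ and concludes by density, and that your presentation makes the sign mechanism for the cross term more explicit.
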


\begin{proof}
We provide the argument only for part $\mathrm{(}i\mathrm{)}$ because $%
\mathrm{(}ii\mathrm{)}$ can be similarly established. Inspired by \cite[%
Theorem 3.20]{CLM}, \cite[Lemma 1]{CLMous} and the proof of \cite[Lemma 3]%
{MMP}, for a fixed $\varepsilon >0,$ let us define the truncation function $%
\xi _{\varepsilon }(s)=\max \{-\varepsilon ,\min \{s,\varepsilon \}\}$ for $%
s\in 
%TCIMACRO{\U{211d} }%
%BeginExpansion
\mathbb{R}
%EndExpansion
.$ It is shown in \cite{MM} that $\xi _{\varepsilon }((\overline{u}_{1}-%
\overline{u}_{2})^{-})\in W^{1,p}(\Omega ),$%
\begin{equation*}
\nabla \xi _{\varepsilon }((\overline{u}_{1}-\overline{u}_{2})^{-})=\xi
_{\varepsilon }^{\prime }((\overline{u}_{1}-\overline{u}_{2})^{-})\nabla (%
\overline{u}_{1}-\overline{u}_{2})^{-}
\end{equation*}%
For any test function $\varphi \in C_{c}^{1}(\Omega )$ with $\varphi \geq 0,$
it holds%
\begin{equation}
\begin{array}{l}
\left\langle -\Delta _{p}\overline{u}_{1}+\frac{|\nabla \overline{u}_{1}|^{p}%
}{\overline{u}_{1}+\delta },\xi _{\varepsilon }((\overline{u}_{1}-\overline{u%
}_{2})^{-})\varphi \right\rangle \\ 
\geq \int_{\Omega }f(x,\overline{u}_{1},\nabla \overline{u}_{1})\xi
_{\varepsilon }((\overline{u}_{1}-\overline{u}_{2})^{-})\varphi \text{ }%
\mathrm{d}x,%
\end{array}
\label{40}
\end{equation}%
and%
\begin{equation}
\begin{array}{l}
\left\langle -\Delta _{p}\overline{u}_{2}+\frac{|\nabla \overline{u}_{2}|^{p}%
}{\overline{u}_{2}+\delta },(\varepsilon -\xi _{\varepsilon }((\overline{u}%
_{1}-\overline{u}_{2})^{-}))\varphi \right\rangle \\ 
\geq \int_{\Omega }f(x,\overline{u}_{2},\nabla \overline{u}_{2})\left(
\varepsilon -\xi _{\varepsilon }((\overline{u}_{1}-\overline{u}%
_{2})^{-})\right) \varphi \text{ }\mathrm{d}x.%
\end{array}
\label{41}
\end{equation}%
On the other hand, using the monotonicity of the $p$-Laplacian operator, we
get 
\begin{equation}
\begin{array}{l}
\left\langle -\Delta _{p}\overline{u}_{1}+\frac{|\nabla \overline{u}_{1}|^{p}%
}{\overline{u}_{1}+\delta },\xi _{\varepsilon }((\overline{u}_{1}-\overline{u%
}_{2})^{-})\varphi \right\rangle \\ 
\text{ \ \ \ \ \ \ \ \ \ \ \ }+\left\langle -\Delta _{p}\overline{u}_{2}+%
\frac{|\nabla \overline{u}_{2}|^{p}}{\overline{u}_{2}+\delta },(\varepsilon
-\xi _{\varepsilon }((\overline{u}_{1}-\overline{u}_{2})^{-}))\varphi
\right\rangle \\ 
\leq \int_{\Omega }|\nabla \overline{u}_{1}|^{p-2}(\nabla \overline{u}%
_{1},\nabla \varphi )_{%
%TCIMACRO{\U{211d} }%
%BeginExpansion
\mathbb{R}
%EndExpansion
^{N}}\xi _{\varepsilon }((\overline{u}_{1}-\overline{u}_{2})^{-})\text{ }%
\mathrm{d}x \\ 
\text{ \ \ \ \ \ \ \ \ \ \ \ }+\int_{\Omega }\frac{|\nabla \overline{u}%
_{1}|^{p}}{\overline{u}_{1}+\delta }\xi _{\varepsilon }((\overline{u}_{1}-%
\overline{u}_{2})^{-})\varphi \text{ }\mathrm{d}x \\ 
\text{ \ \ \ \ \ \ \ \ \ \ \ }+\int_{\Omega }|\nabla \overline{u}%
_{2}|^{p-2}(\nabla \overline{u}_{2},\nabla \varphi )_{%
%TCIMACRO{\U{211d} }%
%BeginExpansion
\mathbb{R}
%EndExpansion
^{N}}\left( \varepsilon -\xi _{\varepsilon }((\overline{u}_{1}-\overline{u}%
_{2})^{-})\right) \text{ }\mathrm{d}x \\ 
\text{ \ \ \ \ \ \ \ \ \ \ \ }+\int_{\Omega }\frac{|\nabla \overline{u}%
_{2}|^{p}}{\overline{u}_{2}+\delta }(\varepsilon -\xi _{\varepsilon }((%
\overline{u}_{1}-\overline{u}_{2})^{-}))\varphi \text{ }\mathrm{d}x.%
\end{array}
\label{42}
\end{equation}%
Then, gathering (\ref{40}) together with (\ref{41}), by means of (\ref{42}),
one gets 
\begin{equation*}
\begin{array}{l}
\int_{\Omega }|\nabla \overline{u}_{1}|^{p-2}(\nabla \overline{u}_{1},\nabla
\varphi )_{%
%TCIMACRO{\U{211d} }%
%BeginExpansion
\mathbb{R}
%EndExpansion
^{N}}\frac{1}{\varepsilon }\xi _{\varepsilon }((\overline{u}_{1}-\overline{u}%
_{2})^{-})\text{ }\mathrm{d}x \\ 
\text{ \ \ \ \ \ \ \ \ \ \ \ }+\int_{\Omega }\frac{|\nabla \overline{u}%
_{1}|^{p}}{\overline{u}_{1}+\delta }\frac{1}{\varepsilon }\xi _{\varepsilon
}((\overline{u}_{1}-\overline{u}_{2})^{-}\text{ }\mathrm{d}x \\ 
\text{ \ \ \ \ \ \ \ \ \ \ \ }+\int_{\Omega }|\nabla \overline{u}%
_{2}|^{p-2}(\nabla \overline{u}_{2},\nabla \varphi )_{%
%TCIMACRO{\U{211d} }%
%BeginExpansion
\mathbb{R}
%EndExpansion
^{N}}\left( 1-\frac{1}{\varepsilon }\xi _{\varepsilon }((\overline{u}_{1}-%
\overline{u}_{2})^{-})\right) \text{ }\mathrm{d}x \\ 
\text{ \ \ \ \ \ \ \ \ \ \ \ }+\int_{\Omega }\frac{|\nabla \overline{u}%
_{2}|^{p}}{\overline{u}_{2}+\delta }(1-\frac{1}{\varepsilon }\xi
_{\varepsilon }((\overline{u}_{1}-\overline{u}_{2})^{-})\text{ }\mathrm{d}x
\\ 
\geq \int_{\Omega }f(x,\overline{u}_{1},\nabla \overline{u}_{1})\frac{1}{%
\varepsilon }\xi _{\varepsilon }((\overline{u}_{1}-\overline{u}%
_{2})^{-})\varphi \text{ }\mathrm{d}x \\ 
\text{ \ \ \ \ \ \ \ \ \ \ \ }+\int_{\Omega }f(x,\overline{u}_{2},\nabla 
\overline{u}_{2})\left( 1-\frac{1}{\varepsilon }\xi _{\varepsilon }((%
\overline{u}_{1}-\overline{u}_{2})^{-})\right) \varphi \text{ }\mathrm{d}x,%
\end{array}%
\end{equation*}%
Passing to the limit as $\varepsilon \rightarrow 0$ and noticing that%
\begin{equation*}
\frac{1}{\varepsilon }\xi _{\varepsilon }((\overline{u}_{1}-\overline{u}%
_{2})^{-}\rightarrow \mathbbm{1}_{\{\overline{u}_{1}<\overline{u}_{2}\}}(x)%
\text{, \ a.e. in }\Omega \text{ as }\varepsilon \rightarrow 0,
\end{equation*}%
we obtain%
\begin{equation*}
\int_{\Omega }|\nabla \overline{u}|^{p-2}\nabla \overline{u}\nabla \varphi 
\text{ }\mathrm{d}x+\int_{\Omega }\frac{|\nabla \overline{u}|^{p}}{\overline{%
u}+\delta }\varphi \text{ }\mathrm{d}x\geq \int_{\Omega }f(x,\overline{u}%
,\nabla \overline{u})\varphi \text{ }\mathrm{d}x,
\end{equation*}%
for all $\varphi \in C_{c}^{1}(\Omega ),$ $\varphi \geq 0$ a.e. in $\Omega $%
. Since $C_{c}^{1}(\Omega )$ is dense in $W^{1,p}(\Omega ),$ we achieve the
desired conclusion.
\end{proof}

Inspired by \cite{MMM}, next we set forth a result addressing location of
solutions and a priori estimates for problem $\left( \mathrm{P}\right) $.

\begin{theorem}
\label{T2} Assume that condition $(\mathrm{H}.2)-(\mathrm{H}.3)$ are
fulfilled.

\begin{itemize}
\item[\textrm{(}$i$\textrm{)}] If $f(x,0,0)\geq 0$ for a.e. $x\in \Omega $,
then for every nodal solution $u_{0}\in \lbrack \underline{u},\overline{u}]$
of problem $\left( \mathrm{P}\right) $ there exists a nontrivial solution $%
u_{+}$ of $\left( \mathrm{P}\right) $ such that $u_{0}\leq u_{+}\leq 
\overline{u}_{+}$ and $u_{+}\geq 0$ on $\Omega $.

\item[\textrm{(}$ii$\textrm{)}] If $f(x,0,0)\leq 0$ for a.e. $x\in \Omega $,
then for every nodal solution $u_{0}\in \lbrack \underline{u},\overline{u}]$
of problem $\left( \mathrm{P}\right) $ there exists a nontrivial solution $%
u_{-}$ of $\left( \mathrm{P}\right) $ such that $u_{0}\geq u_{-}\geq 
\overline{u}_{-}$ and $u_{-}\leq 0$ on $\Omega $.

\item[\textrm{(}$iii$\textrm{)}] If $f(x,0,0)=0$ for a.e. $x\in \Omega $,
then for every nodal solution $u_{0}\in \lbrack \underline{u},\overline{u}]$
of problem $\left( \mathrm{P}\right) $ there exist two other nontrivial
solutions $u_{+}$ and $u_{-}$ of $\left( \mathrm{P}\right) $ such that $%
u_{-}\leq u_{0}\leq u_{+}$, $u_{+}\geq 0$ and $u_{-}\leq 0$ on $\Omega $.
\end{itemize}
\end{theorem}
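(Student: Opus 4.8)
The plan is to read off all three assertions from Theorem~\ref{T3} combined with Lemma~\ref{lemma-1}, the one-sided conditions on $f(\cdot,0,0)$ serving only to promote the constant $0$ to a sub- or a supersolution of $(\mathrm{P})$. Since $f(x,0,0)=0$ a.e. means that both $f(x,0,0)\geq 0$ and $f(x,0,0)\leq 0$ hold, statement $(iii)$ is an immediate consequence of $(i)$ and $(ii)$, so it suffices to treat $(i)$, the assertion $(ii)$ being obtained by the dual construction (exchange ``sub'' and ``super'', $\max$ and $\min$, and parts $(ii)$ and $(i)$ of Lemma~\ref{lemma-1}; note that the absorption term $\frac{|\nabla u|^{p}}{u+\delta}$ is not odd, so there is no shortcut via $u\mapsto -u$).

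For $(i)$, the first point is that $0$ is a subsolution of $(\mathrm{P})$: one has $0+\delta=\delta>0$, and inserting $\underline{u}=0$ into the left-hand side of the first inequality of (\ref{c2}) annihilates both the gradient term and the absorption term, leaving $-\int_{\Omega}f(x,0,0)\varphi\,\mathrm{d}x$, which is $\leq 0$ for every $\varphi\geq 0$ precisely because $f(x,0,0)\geq 0$ a.e. Next, fix a nodal solution $u_{0}\in[\underline{u},\overline{u}]$ of $(\mathrm{P})$; by Theorem~\ref{T3} we may take $u_{0}\in\mathcal{C}^{1}(\overline{\Omega})$, and being a solution it satisfies both inequalities of (\ref{c2}), hence is at once a subsolution and a supersolution of $(\mathrm{P})$. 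Applying Lemma~\ref{lemma-1}$(ii)$ to the subsolutions $u_{0}$ and $0$ shows that $\underline{w}:=\max\{u_{0},0\}=u_{0}^{+}$ is again a subsolution of $(\mathrm{P})$; it lies in $W^{1,\infty}(\Omega)$ (since $u_{0}\in\mathcal{C}^{1}(\overline{\Omega})$ gives $\nabla u_{0}^{+}=\mathbbm{1}_{\{u_{0}>0\}}\nabla u_{0}\in L^{\infty}(\Omega)$), satisfies $0\leq\underline{w}$ and $u_{0}\leq\underline{w}$, and, because $u_{0}$ changes sign, $\underline{w}\not\equiv 0$ and $\underline{w}\not\equiv u_{0}$ (on $\{u_{0}<0\}$, a set of positive measure, $\underline{w}=0>u_{0}$).

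It remains to trap $\underline{w}$ below a supersolution and to invoke Theorem~\ref{T3}. The natural upper barrier is the positive part $\overline{u}_{+}:=\max\{\overline{u},0\}$ of the given supersolution, which dominates $\underline{w}$ because $u_{0}\leq\overline{u}$; one must check that $\overline{u}_{+}$ is still a supersolution of $(\mathrm{P})$, which I would do by the $\xi_{\varepsilon}$-truncation argument used in the proof of Lemma~\ref{lemma-1}, the sign condition on $f(\cdot,0,0)$ controlling the region where $\overline{u}_{+}$ is locally constant (so that its gradient, hence the absorption contribution, vanishes there). Granting this, and observing that the estimates of Section~\ref{S3} (Lemmas~\ref{L3}--\ref{L4}, Theorems~\ref{T5} and~\ref{T3}) involve the sub--supersolution pair only through $L^{\infty}$ bounds on the functions and their gradients — so that a pair in $W^{1,\infty}(\Omega)$ is admissible — Theorem~\ref{T3} with $(\underline{w},\overline{u}_{+})$ in place of $(\underline{u},\overline{u})$ furnishes a solution $u_{+}\in\mathcal{C}^{1}(\overline{\Omega})$ of $(\mathrm{P})$ with $\underline{w}\leq u_{+}\leq\overline{u}_{+}$, that is $u_{0}\leq u_{0}^{+}\leq u_{+}\leq\overline{u}_{+}$ and $u_{+}\geq 0$ on $\Omega$; moreover $u_{+}\geq\underline{w}\not\equiv 0$ makes $u_{+}$ nontrivial (and $u_{+}\neq u_{0}$ since $u_{0}$ changes sign while $u_{+}\geq 0$). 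This settles $(i)$; $(ii)$ follows dually and $(iii)$ by applying $(i)$ and $(ii)$ simultaneously. The step I expect to be the real obstacle is exactly this last one, namely showing that the one-sided truncation $\overline{u}_{+}$ (respectively $\overline{u}_{-}$ in $(ii)$) remains a supersolution (respectively subsolution) of $(\mathrm{P})$ despite the natural-growth absorption term: this is where the hypothesis on $f(\cdot,0,0)$ is used in an essential way, and where the comparison technique of Lemma~\ref{lemma-1} has to be adapted with care.
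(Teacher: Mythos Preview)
Your argument mirrors the paper's almost exactly: verify that $0$ is a subsolution from $f(\cdot,0,0)\geq 0$, apply Lemma~\ref{lemma-1}(ii) to get that $u_{0,+}=\max\{u_0,0\}$ is a subsolution, pair it with $\overline{u}_{+}=\max\{\overline{u},0\}$, invoke Theorem~\ref{T3}, and read off nontriviality from the nodal character of $u_0$; part (ii) is dual and (iii) combines both. The two technical points you flag---that $\overline{u}_{+}$ must itself be checked to be a supersolution (it is \emph{not} a minimum of supersolutions in case (i), since $0$ need not be one), and that the truncated pair lies only in $W^{1,\infty}$ rather than $\mathcal{C}^{1}(\overline{\Omega})$ as (H.2) formally requires---are not addressed in the paper either: there Theorem~\ref{T3} is simply applied to $[u_{0,+},\overline{u}_{+}]$ without further comment, so on these issues you are being more scrupulous than the source.
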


\begin{proof}
\textrm{(}$i$\textrm{)} Let $u_{0}$ be a nodal solution of problem $\left( 
\mathrm{P}\right) $ within $[\underline{u},\overline{u}]$. The assumption $%
f(x,0,0)\geq 0$ for a.e. $x\in \Omega $ ensures that $0$ is a subsolution of
problem $\left( \mathrm{P}\right) $. By Lemma \ref{lemma-1}, part $\mathrm{(}%
i\mathrm{),}$ we infer that $u_{0,+}:=\max \{0,u_{0}\}\ $is a subsolution of
problem $\left( \mathrm{P}\right) $ which obviously satisfies $u_{0,+}\leq 
\overline{u}_{+}$, with $\overline{u}_{+}:=\max \{0,\overline{u}\}$. So, in
view of Theorem \ref{T3}, there exists a solution $u_{+}\in \mathcal{C}^{1}(%
\overline{\Omega })$ of $\left( \mathrm{P}\right) $ within $[u_{0,+},%
\overline{u}_{+}]$. Since the solution $u_{0}$ of $\left( \mathrm{P}\right) $
is nodal, its positive part $u_{0,+}$ is strictly positive on a subset of $%
\Omega $ of positive measure. Hence, $u_{+}$ is positive.

\textrm{(}$ii$\textrm{)} Let $u_{0}$ be a nodal solution of problem $\left( 
\mathrm{P}\right) $ within $[\underline{u},\overline{u}]$. The assumption $%
f(x,0,0)\leq 0$ for a.e. $x\in \Omega $ ensures that $0$ is a supersolution
of problem $\left( \mathrm{P}\right) $. By Lemma \ref{lemma-1}, part $%
\mathrm{(}ii\mathrm{)}$, we infer that $u_{0,-}:=\min \{0,u_{0}\}\ $is a
supersolution of problem $\left( \mathrm{P}\right) $ which clearly satisfies 
$u_{0,-}\geq \overline{u}_{-}$, with $\overline{u}_{-}:=\min \{0,\overline{u}%
\}$. Then, Theorem \ref{T3} implies that there exits a solution $u_{-}\in 
\mathcal{C}^{1}(\overline{\Omega })$ of $\left( \mathrm{P}\right) $ within $[%
\overline{u}_{-},u_{0,-}]$. Recalling that the solution $u_{0}$ of $\left( 
\mathrm{P}\right) $ is nodal, its negative part $u_{0,-}$ is strictly
negative on a subset of $\Omega $ of positive measure. Therefore, $u_{-}\leq
0$ and $u_{-}\neq 0$.

\textrm{(}$iii$\textrm{) }If $f(x,0,0)=0$ for a.e. $x\in \Omega $, then the
assertions \textrm{(}$i$\textrm{)} and \textrm{(}$ii$\textrm{)} can be
applied simultaneously, giving rise to two nontrivial opposite constant-sign
solutions $u_{+}$ and $u_{-}$ of problem $\left( \mathrm{P}\right) $ with
the properties required in the statement.
\end{proof}

\section{Nodal solutions}

\label{S4}

In this section, beside $(\mathrm{H}_{1}),$ we will posit the hypothesis
below.

\begin{itemize}
\item[$(\mathrm{H.}4)$] With appropriate $m>0$ one has 
\begin{equation*}
\lim_{|s|\rightarrow 0}\inf \{f(x,s,\xi ):\xi \in 
%TCIMACRO{\U{211d} }%
%BeginExpansion
\mathbb{R}
%EndExpansion
^{N}\}>m\text{,}
\end{equation*}%
uniformly in $x\in \Omega $.
\end{itemize}

Our first goal is to construct sub-and-supersolution pairs of $\left( 
\mathrm{P}\right) $. With this aim, consider the homogeneous Dirichlet
problem 
\begin{equation}
-\Delta _{p}z=1,\text{ in }\Omega ,\text{ \ }z=0\text{ on }\partial \Omega 
\text{,}  \label{5bis}
\end{equation}%
which admits a unique solution $z\in \mathcal{C}^{1,\tau }(\overline{\Omega }%
)$ satisfying 
\begin{equation}
\Vert z\Vert _{C^{1,\tau }(\overline{\Omega })}\leq L\text{,}  \label{3}
\end{equation}%
\begin{equation}
\frac{d(x)}{c}\leq z\leq cd(x)\text{\ in\ }\Omega \text{,\ \ \ \ }\frac{%
\partial z}{\partial \eta }<0\text{\ on\ }\partial \Omega \text{,}
\label{12*}
\end{equation}%
for certain constant $c>1$.

Now, given $0<\delta <\mathrm{diam}(\Omega )$, denote by $z_{\delta }\in 
\mathcal{C}^{1,\tau }(\overline{\Omega })$ the solution of the Dirichlet
problem 
\begin{equation}
-\Delta _{p}u=\left\{ 
\begin{array}{ll}
1 & \text{if }x\in \Omega \backslash \overline{\Omega }_{\delta }, \\ 
-1 & \text{otherwise},%
\end{array}%
\right. \quad u=0\text{ on }\partial \Omega ,  \label{1}
\end{equation}%
Existence and uniqueness directly stem from Minty-Browder's Theorem \cite{B}
while $\mathcal{C}^{1,\tau }(\overline{\Omega })$ regularity follows from
Lieberman's regularity Theorem \cite{L}. Moreover, the weak comparison
principle implies that 
\begin{equation}
z_{\delta }\leq z\ \text{in }\Omega ,  \label{3*}
\end{equation}%
while for $\delta >0$ small enough it holds%
\begin{equation}
\frac{\partial z_{\delta }}{\partial \eta }<\frac{1}{2}\frac{\partial z}{%
\partial \eta }<0\text{ on }\partial \Omega \text{ \ and \ }z_{\delta }\geq 
\frac{1}{2}\text{\thinspace }z\text{ \ in }\Omega  \label{4*}
\end{equation}%
(see \cite{H}).

Define 
\begin{equation}
\underline{u}:=\delta ^{\frac{1}{p}}z_{\delta }^{\omega }-\delta \text{, \ \
\ }\overline{u}:=\delta ^{-p}z^{\overline{\omega }}-\delta \text{,}
\label{32}
\end{equation}%
where 
\begin{equation}
\frac{\omega -1}{\omega }>\frac{1}{p-1}>\frac{\overline{\omega }-1}{%
\overline{\omega }}\text{ \ with }\omega >\overline{\omega }>1  \label{4}
\end{equation}%
and%
\begin{equation}
\overline{\omega }<1+p(1-\frac{\max \{\alpha ,\beta \}}{p-1}).  \label{5}
\end{equation}%
From (\ref{32}), (\ref{3}), (\ref{3*}) and (\ref{12*}), it follows 
\begin{equation}
\overline{u}\leq \delta ^{-p}(Ld)^{\overline{\omega }}\text{ \ and \ }\Vert
\nabla \overline{u}\Vert _{\infty }\leq \delta ^{-p}\hat{L},  \label{2}
\end{equation}%
with $\hat{L}:=\overline{\omega }L^{\overline{\omega }}$. Moreover,%
\begin{equation}
\left\{ 
\begin{array}{l}
\frac{\partial \underline{u}}{\partial \eta }=\delta ^{\frac{1}{p}}\frac{%
\partial (z_{\delta }^{\omega })}{\partial \eta }=\delta ^{\frac{1}{p}%
}\omega z_{\delta }^{\omega -1}\frac{\partial z_{\delta }}{\partial \eta }=0
\\ 
\frac{\partial \overline{u}}{\partial \eta }=\delta ^{-p}\frac{\partial (z^{%
\overline{\omega }})}{\partial \eta }=\delta ^{-p}\overline{\omega }z^{%
\overline{\omega }-1}\frac{\partial z}{\partial \eta }=0%
\end{array}%
\right. \text{ on }\partial \Omega ,  \label{9}
\end{equation}%
because $z,$ $z_{\delta }$ solve (\ref{5bis}), (\ref{1}), respectively and $%
\omega ,\overline{\omega }>1$.

We claim that $\underline{u}\leq \overline{u}$ with a small $\delta >0$.
Indeed, since $\omega >\overline{\omega }$ and $z_{\delta }\leq z$ for all $%
\delta <\mathrm{diam}(\Omega ),$ it follows that%
\begin{equation*}
\begin{array}{l}
\overline{u}(x)-\underline{u}(x)=\left( \delta ^{-p}z^{\bar{\omega}}-\delta
\right) -\left( \delta ^{\frac{1}{p}}z_{\delta }^{\omega }-\delta \right) \\ 
\geq \delta ^{-p}z^{\bar{\omega}}-\delta ^{\frac{1}{p}}z^{\omega }=z^{\omega
}(\delta ^{-p}z^{\bar{\omega}-\omega }-\delta ^{\frac{1}{p}}) \\ 
\geq z^{\omega }(\delta ^{-p}(cd(x))^{\bar{\omega}-\omega }-\delta ^{\frac{1%
}{p}})\geq 0,%
\end{array}%
\end{equation*}%
provided that $\delta >0$ is small. Thus $\underline{u}\leq \overline{u}$ in 
$\Omega $, as desired.

\begin{theorem}
\label{T1} Let $(\mathrm{H.}3)$--$(\mathrm{H.}4)$ be satisfied. Then problem 
$\left( \mathrm{P}\right) $ admits a nodal solution $u_{0}\in \mathcal{C}%
^{1}(\overline{\Omega })$ such that $u_{0}(x)$ is negative once $%
d(x)\rightarrow 0$. Furthermore, there exists a positive solution $u_{+}\in 
\mathcal{C}^{1}(\overline{\Omega })$ of $\left( \mathrm{P}\right) $ with $%
u_{+}(x)$ is zero once $d(x)\rightarrow 0$.
\end{theorem}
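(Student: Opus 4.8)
The plan is to apply Theorem~\ref{T3} to the explicit pair $(\underline u,\overline u)$ defined in \eqref{32} to produce a solution $u_{0}$ of $(\mathrm{P})$ with $\underline u\le u_{0}\le\overline u$, and then to invoke Theorem~\ref{T2}(i) to obtain the positive solution $u_{+}$. The real work lies in verifying that this pair actually satisfies $(\mathrm{H}.2)$ once $\delta>0$ is chosen small; hypothesis $(\mathrm{H}.1)$ follows from $(\mathrm{H}.3)$ since $\max\{\alpha,\beta\}<p-1$ implies $\beta\le p-1$, so $(\mathrm{H}.3)$ yields the bound $|f(x,s,\xi)|\le M(\rho)(1+|\xi|^{\beta})$ on $[-\rho,\rho]$ with $q:=\beta\le p-1$. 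We have already checked above that $\underline u\le\overline u$ in $\Omega$ for small $\delta$, and \eqref{9} gives the Neumann condition $\partial\underline u/\partial\eta=\partial\overline u/\partial\eta=0$ on $\partial\Omega$. It remains to verify the differential inequalities \eqref{c2} and the positivity $\underline u+\delta>0$; the latter is immediate since $\underline u+\delta=\delta^{1/p}z_{\delta}^{\omega}\ge0$, with strict positivity inside $\Omega$ because $z_{\delta}>0$ there.

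The heart of the matter is the subsolution inequality. First I would compute, using \eqref{5bis}--\eqref{1} and the chain rule, that for $\underline u=\delta^{1/p}z_{\delta}^{\omega}-\delta$ one has $-\Delta_{p}\underline u=\delta^{(p-1)/p}\bigl(\omega^{p-1}z_{\delta}^{(\omega-1)(p-1)}(-\Delta_{p}z_{\delta})-\omega^{p-1}(\omega-1)(p-1)z_{\delta}^{(\omega-1)(p-1)-1}|\nabla z_{\delta}|^{p}\bigr)$ pointwise where $z_{\delta}>0$, while $|\nabla\underline u|^{p}/(\underline u+\delta)=\delta^{(p-1)/p}\omega^{p}z_{\delta}^{p(\omega-1)-\omega}|\nabla z_{\delta}|^{p}$. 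The condition $(\omega-1)/\omega>1/(p-1)$ in \eqref{4} is precisely what makes the exponent arrangement force the combined absorption-plus-gradient contribution to have a favorable sign: the negative gradient term from $-\Delta_{p}\underline u$ dominates, so that $-\Delta_{p}\underline u+|\nabla\underline u|^{p}/(\underline u+\delta)\le C\delta^{(p-1)/p}z_{\delta}^{(\omega-1)(p-1)}$ on $\Omega\setminus\overline\Omega_{\delta}$ (where $-\Delta_{p}z_{\delta}=1$) and is even more negative on $\overline\Omega_{\delta}$ (where $-\Delta_{p}z_{\delta}=-1$). One then needs this upper bound to be $\le f(x,\underline u,\nabla\underline u)$. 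Near $\partial\Omega$ the term $z_{\delta}^{(\omega-1)(p-1)}$ tends to $0$, while $\underline u=\delta^{1/p}z_{\delta}^{\omega}-\delta\to-\delta$, so $|\underline u|$ stays bounded away from $0$ there and $(\mathrm{H}.3)$ gives a lower bound $f\ge-M(1+\cdots)$; the point is that the left side carries an extra $\delta^{(p-1)/p}$ so it is small. In the interior, where $z_{\delta}$ is bounded below, $|\underline u|$ can be made small by shrinking $\delta$, and then $(\mathrm{H}.4)$ gives $f(x,\underline u,\nabla\underline u)>m>0$ uniformly, which beats the $O(\delta^{(p-1)/p})$ left-hand side once $\delta$ is small. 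Some care is needed at the transition region, but a uniform estimate using \eqref{3}, \eqref{4*}, \eqref{12*} and the bound $|\nabla\underline u|\le C\delta^{1/p}$ closes the gap. Handling the single point (or thin set) where $z_{\delta}$ achieves its interior behavior and where $\underline u$ might not be $C^{2}$ is routine since the weak formulation \eqref{c2} only needs an integral inequality, and $\underline u\in\mathcal C^{1}(\overline\Omega)$ by Lieberman's regularity.

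The supersolution inequality for $\overline u=\delta^{-p}z^{\overline\omega}-\delta$ is handled analogously but with the opposite sign engineering: now \eqref{4} gives $1/(p-1)>(\overline\omega-1)/\overline\omega$, which reverses the competition so that $-\Delta_{p}\overline u+|\nabla\overline u|^{p}/(\overline u+\delta)\ge c\,\delta^{-p(p-1)}z^{(\overline\omega-1)(p-1)}$ on $\Omega$ (using $-\Delta_{p}z=1$ from \eqref{5bis}). Against this I must bound $f(x,\overline u,\nabla\overline u)$ from above using $(\mathrm{H}.3)$ and the estimates \eqref{2}: $|\overline u|\le\delta^{-p}(Ld)^{\overline\omega}$ and $|\nabla\overline u|\le\delta^{-p}\hat L$, so $f(x,\overline u,\nabla\overline u)\le M(1+\delta^{-p\alpha}(Ld)^{\overline\omega\alpha}+\delta^{-p\beta}\hat L^{\beta})\le C\delta^{-p\max\{\alpha,\beta\}}$ (for small $\delta$, after absorbing the polynomial-in-$d$ factors into a constant on $\overline\Omega$). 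The exponent comparison $p(p-1)>p\max\{\alpha,\beta\}$, equivalent to $\max\{\alpha,\beta\}<p-1$ from $(\mathrm{H}.3)$, then shows $\delta^{-p(p-1)}$ grows faster than $\delta^{-p\max\{\alpha,\beta\}}$ as $\delta\to0$; combined with the lower bound $z^{(\overline\omega-1)(p-1)}\ge(d/c)^{(\overline\omega-1)(p-1)}$ away from a neighborhood of $\partial\Omega$ and a separate near-boundary argument using \eqref{5} (which controls $\overline\omega$ so that the vanishing rate of $z^{(\overline\omega-1)(p-1)}$ at the boundary is slow enough relative to $\overline\omega\alpha$), we obtain \eqref{c2}'s second inequality for $\delta$ small. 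Once $(\mathrm{H}.2)$ is established, Theorem~\ref{T3} yields $u_{0}\in\mathcal C^{1}(\overline\Omega)$ solving $(\mathrm{P})$ with $\underline u\le u_{0}\le\overline u$; since $\underline u>0$ on compact subsets of $\Omega$ and $\overline u<0$ in a neighborhood of $\partial\Omega$ (because there $\delta^{-p}z^{\overline\omega}<\delta$), the solution $u_{0}$ is sign-changing and negative as $d(x)\to0$. Finally, $(\mathrm{H}.4)$ forces $f(x,0,0)\ge m>0$, so Theorem~\ref{T2}(i) applies and delivers the positive solution $u_{+}\in[u_{0,+},\overline u_{+}]$; near $\partial\Omega$ one has $\overline u_{+}=\max\{0,\overline u\}=0$, hence $u_{+}(x)=0$ once $d(x)\to0$.

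I expect the main obstacle to be the boundary-layer analysis of both inequalities: away from $\partial\Omega$ the powers of $\delta$ decide everything, but near $\partial\Omega$ the weights $z_{\delta}^{(\omega-1)(p-1)}$ and $z^{(\overline\omega-1)(p-1)}$ degenerate, and one must play the vanishing of these weights against the polynomial growth of $f$ in $|s|$ and $|\xi|$. This is exactly where conditions \eqref{4} and \eqref{5} are tailored to make the two competing rates compatible, and keeping track of all the constants (which depend on $L$, $c$, $\Omega$, $p$, $\alpha$, $\beta$, $M$, $\omega$, $\overline\omega$ but not on $\delta$) to conclude that a single small $\delta$ works for both inequalities simultaneously is the delicate bookkeeping step.
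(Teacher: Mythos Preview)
Your overall strategy matches the paper's exactly: verify that the explicit pair \eqref{32} satisfies $(\mathrm{H}.2)$ for small $\delta$, apply Theorem~\ref{T3} to get $u_0\in[\underline u,\overline u]$, read off the sign change from the explicit behaviour of $\underline u$ and $\overline u$, and then invoke Theorem~\ref{T2}(i) via $(\mathrm{H}.4)$ to produce $u_+$. The supersolution computation and the nodality argument are essentially as in the paper.

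There is, however, a genuine gap in your subsolution verification near the boundary. You write that there $|\underline u|$ ``stays bounded away from $0$'' (it tends to $\delta$ as $d(x)\to 0$) and then invoke $(\mathrm{H}.3)$ to obtain $f\ge -M(1+\cdots)$. But that is a \emph{negative} lower bound on $f$, while the left-hand side $-\Delta_p\underline u+|\nabla\underline u|^p/(\underline u+\delta)$ is only known to be small (of order $\delta^{(p-1)/p}z_\delta^{(\omega-1)(p-1)}$, possibly slightly negative in $\Omega_\delta$ but tending to $0$ as $d(x)\to 0$ since $(\omega-1)(p-1)>1$ by \eqref{4}). A small left-hand side is not automatically $\le$ a quantity bounded below by a fixed negative constant; the inequality $\text{LHS}\le f$ simply does not follow from your two estimates.

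The fix is simpler than your boundary/interior split: observe that $|\underline u|\le \delta+\delta^{1/p}\|z_\delta\|_\infty^{\omega}\le \delta+\delta^{1/p}L^{\omega}$ \emph{uniformly on} $\overline\Omega$ (using \eqref{3*}, \eqref{3}), so for $\delta$ small enough $(\mathrm{H}.4)$ applies on the whole domain and yields $f(x,\underline u,\nabla\underline u)>m$ everywhere. Since the left-hand side is bounded above by $\delta^{(p-1)/p}\omega^{p-1}L^{(\omega-1)(p-1)}$ in $\Omega\setminus\overline\Omega_\delta$ and by $0$ in $\Omega_\delta$, choosing $\delta$ so small that $\delta^{(p-1)/p}\omega^{p-1}L^{(\omega-1)(p-1)}<m$ closes the inequality globally. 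This is precisely how the paper handles it, and it removes the need for any separate near-boundary or transition-region argument.
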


\begin{proof}
Let us show that the function $\overline{u}$ given by (\ref{32}) satisfies (%
\ref{c2}). With this aim, pick $u\in W^{1,p}(\Omega )$ such that $-\overline{%
u}\leq u\leq \overline{u}$. From $(\mathrm{H.}3),$ (\ref{2}), it follows%
\begin{equation}
\begin{split}
|f(\cdot ,\overline{u},\nabla \overline{u})|& \leq M(1+|\overline{u}%
|^{\alpha }+|\nabla \overline{u}|^{\beta }) \\
& \leq M(1+(\delta ^{-p}(Ld)^{\overline{\omega }})^{\alpha }+(\delta ^{-p}%
\hat{L})^{\beta } \\
& \leq C\delta ^{-p\max \{\alpha ,\beta \}}\text{,}
\end{split}
\label{6}
\end{equation}%
for some constant $C>0$ and for $\delta >0$ sufficiently small. A direct
computation gives 
\begin{equation*}
\begin{array}{l}
-\Delta _{p}z^{\bar{\omega}}+\lambda \frac{|\nabla z^{\bar{\omega}}|^{p}}{z^{%
\bar{\omega}}}=\bar{\omega}^{p-1}\left( 1-(\bar{\omega}-1)\left( p-1\right) 
\frac{|\nabla z|^{p}}{z}\right) z^{(\bar{\omega}-1)(p-1)}+\bar{\omega}^{p}%
\frac{z^{(\bar{\omega}-1)p}|\nabla z|^{p}}{z^{\bar{\omega}}} \\ 
=\bar{\omega}^{p-1}\left( 1-(\bar{\omega}-1)\left( p-1\right) \frac{|\nabla
z|^{p}}{z}\right) z^{(\bar{\omega}-1)(p-1)}+\bar{\omega}^{p}z^{(\bar{\omega}%
-1)(p-1)}\frac{z^{\bar{\omega}-1}|\nabla z|^{p}}{z^{\bar{\omega}}} \\ 
=\bar{\omega}^{p-1}\left[ 1+\bar{\omega}(1-\frac{(\bar{\omega}-1)\left(
p-1\right) }{\bar{\omega}})\frac{|\nabla z|^{p}}{z}\right] z^{(\bar{\omega}%
-1)(p-1)}.%
\end{array}%
\end{equation*}%
Using (\ref{32}) and (\ref{4}) one has%
\begin{equation}
\begin{array}{l}
-\Delta _{p}\overline{u}+\frac{|\nabla \overline{u}|^{p}}{\overline{u}%
+\delta }=\delta ^{-p(p-1)}(-\Delta _{p}z^{\bar{\omega}}+\frac{|\nabla z^{%
\bar{\omega}}|^{p}}{z^{\bar{\omega}}}) \\ 
=\delta ^{-p(p-1)}\bar{\omega}^{p-1}\left[ 1+\bar{\omega}(1-\frac{(\bar{%
\omega}-1)\left( p-1\right) }{\bar{\omega}})\frac{|\nabla z|^{p}}{z}\right]
z^{(\bar{\omega}-1)(p-1)} \\ 
\geq \delta ^{-p(p-1)}\bar{\omega}^{p-1}\left\{ 
\begin{array}{ll}
z^{(\bar{\omega}-1)(p-1)} & \text{in}\;\Omega \backslash \overline{\Omega }%
_{\delta }, \\ 
\bar{\omega}(1-\frac{(\bar{\omega}-1)\left( p-1\right) }{\bar{\omega}})z^{(%
\bar{\omega}-1)(p-1)-1}|\nabla z|^{p} & \text{in}\;\Omega _{\delta }\text{.}%
\end{array}%
\right.%
\end{array}
\label{7}
\end{equation}%
Thus, after decreasing $\delta $ if necessary, we achieve%
\begin{equation}
\begin{array}{l}
\delta ^{-p(p-1)}\bar{\omega}^{p-1}z^{(\bar{\omega}-1)(p-1)} \\ 
\geq \delta ^{-p(p-1)}\bar{\omega}^{p-1}(c^{-1}d(x))^{(\bar{\omega}-1)(p-1)}
\\ 
\geq \delta ^{-p(p-1)}\bar{\omega}^{p-1}(c^{-1}\delta )^{(\bar{\omega}%
-1)(p-1)} \\ 
=\delta ^{(\bar{\omega}-1-p)(p-1)}\bar{\omega}^{p-1}c^{-(\bar{\omega}%
-1)(p-1)} \\ 
\geq \delta ^{-p\max \{\alpha ,\beta \}}\text{ \ in }\Omega \backslash 
\overline{\Omega }_{\delta },%
\end{array}
\label{8}
\end{equation}%
because of (\ref{5}).\ Thus, (\ref{6})--(\ref{8}) yield 
\begin{equation*}
-\Delta _{p}\overline{u}+\frac{|\nabla \overline{u}|^{p}}{\overline{u}%
+\delta }\geq f(\cdot ,\overline{u},\nabla \overline{u})\text{\ \ in\ }%
\Omega \backslash \overline{\Omega }_{\delta }\text{.}
\end{equation*}%
Let now $x\in {\Omega }_{\delta }$. From (\ref{12*}) and (\ref{4}), one can
find a constant $\bar{\mu}>0$ such that 
\begin{equation*}
(1-\frac{(\bar{\omega}-1)\left( p-1\right) }{\bar{\omega}})|\nabla z|>\bar{%
\mu}\text{ \ in }{\Omega }_{\delta }.
\end{equation*}%
Then, (\ref{12*}) and (\ref{5}) entail%
\begin{equation*}
\begin{array}{l}
\delta ^{-p(p-1)}\bar{\omega}^{p}(1-\frac{(\bar{\omega}-1)\left( p-1\right) 
}{\bar{\omega}})z^{(\bar{\omega}-1)(p-1)-1}|\nabla z{|}^{p} \\ 
\geq \delta ^{-p(p-1)}\bar{\omega}^{p}(cd(x))^{(\bar{\omega}-1)(p-1)-1}\bar{%
\mu}^{p} \\ 
\geq \delta ^{-p(p-1)}\bar{\omega}^{p}(c\delta )^{(\bar{\omega}-1)(p-1)-1}%
\bar{\mu}^{p} \\ 
\geq \delta ^{-p\max \{\alpha ,\beta \}}\text{ \ in }\Omega _{\delta }\text{,%
}%
\end{array}%
\end{equation*}%
for $\delta >0$\ small enough, that is 
\begin{equation*}
-\Delta _{p}\overline{u}+\frac{|\nabla \overline{u}|^{p}}{\overline{u}%
+\delta }\geq f(\cdot ,\overline{u},\nabla \overline{u})\text{\ \ in\ }%
\Omega _{\delta }\text{.}
\end{equation*}%
Summing up, 
\begin{equation*}
-\Delta _{p}\overline{u}+\frac{|\nabla \overline{u}|^{p}}{\overline{u}%
+\delta }\geq f(\cdot ,\overline{u},\nabla \overline{u})\text{\ on the
whole\ }\Omega \text{.}
\end{equation*}%
Finally, test with $\varphi \in W_{b}^{1,p}(\Omega ),$ $\varphi \geq 0$ a.e.
in $\Omega $, and recall (\ref{9}) to get%
\begin{equation*}
\begin{array}{l}
\int_{\Omega }|\nabla \overline{u}|^{p-2}\nabla \overline{u}\nabla \varphi \,%
\mathrm{d}x+\int_{\Omega }\frac{|\nabla \overline{u}|^{p}}{\overline{u}%
+\delta }\varphi \mathrm{d}x-\left\langle \frac{\partial \overline{u}}{%
\partial \eta _{p}},\gamma _{0}(\varphi )\right\rangle _{\partial \Omega }
\\ 
\geq \int_{\Omega }f(\cdot ,\overline{u},\nabla \overline{u})\varphi \text{%
\thinspace \textrm{d}}x\text{,}%
\end{array}%
\end{equation*}%
as desired. Here, $\gamma _{0}$ is the trace operator on $\partial \Omega $, 
\begin{equation}
\frac{\partial w}{\partial \eta _{p}}:=|\nabla w|^{p-2}\frac{\partial w}{%
\partial \eta },\quad \forall \,w\in W^{1,p}(\Omega )\cap C^{1}(\overline{%
\Omega }),  \label{conode}
\end{equation}%
while $\left\langle \cdot ,\cdot \right\rangle _{\partial \Omega }$ denotes
the duality brackets for the pair 
\begin{equation*}
(W^{1/p^{\prime },p}(\partial \Omega ),W^{-1/p^{\prime },p^{\prime
}}(\partial \Omega )).
\end{equation*}%
Next, we show that the function $\underline{u}$ in (\ref{32}) satisfies (\ref%
{c2}). In $\Omega \backslash \overline{\Omega }_{\delta }$, a direct
computation gives 
\begin{equation*}
\begin{array}{l}
-\Delta _{p}z_{\delta }^{\omega }+\frac{|\nabla z_{\delta }^{\omega }|^{p}}{%
z_{\delta }^{\omega }}=\omega ^{p-1}\left( 1-(\omega -1)\left( p-1\right) 
\frac{|\nabla z_{\delta }|^{p}}{z_{\delta }}\right) z_{\delta }^{(\omega
-1)(p-1)}+\omega ^{p}\frac{z_{\delta }^{(\omega -1)p}|\nabla z_{\delta }|^{p}%
}{z_{\delta }^{\omega }} \\ 
=\omega ^{p-1}\left[ 1+\omega (1-\frac{(\omega -1)\left( p-1\right) }{\omega 
})\frac{|\nabla z_{\delta }|^{p}}{z_{\delta }}\right] z_{\delta }^{(\omega
-1)(p-1)},%
\end{array}%
\end{equation*}%
while in $\Omega _{\delta },$ we get%
\begin{equation*}
\begin{array}{l}
-\Delta _{p}z_{\delta }^{\omega }+\frac{|\nabla z_{\delta }^{\omega }|^{p}}{%
z_{\delta }^{\omega }}=\omega ^{p-1}\left[ -1+\omega (1-\frac{(\omega
-1)\left( p-1\right) }{\omega })\frac{|\nabla z_{\delta }|^{p}}{z_{\delta }}%
\right] z_{\delta }^{(\omega -1)(p-1)}\text{.}%
\end{array}%
\end{equation*}%
Thus, by (\ref{32}) and due to (\ref{4}), we have 
\begin{equation*}
-\Delta _{p}\underline{u}+\frac{|\nabla \underline{u}|^{p}}{\underline{u}%
+\delta }=\delta ^{\frac{1}{p^{\prime }}}(-\Delta _{p}z_{\delta }^{\omega }+%
\frac{|\nabla z_{\delta }^{\omega }|^{p}}{z_{\delta }^{\omega }})\leq
\left\{ 
\begin{array}{ll}
\delta ^{\frac{1}{p^{\prime }}}\omega ^{p-1}z_{\delta }^{(\omega -1)(p-1)} & 
\text{in }\Omega \backslash \overline{\Omega }_{\delta } \\ 
0 & \text{in }\Omega _{\delta }.%
\end{array}%
\right.
\end{equation*}%
Hence, on account of (\ref{3*}), (\ref{3}) and for an appropriate constant $%
m $ in $(\mathrm{H.}4)$ chosen so that 
\begin{equation*}
m>\delta ^{\frac{1}{p^{\prime }}}\omega ^{p-1}L^{(\omega -1)(p-1)}\text{ \
for }\delta >0\text{ sufficiently small,}
\end{equation*}%
we get 
\begin{equation}
-\Delta _{p}\underline{u}+\frac{|\nabla \underline{u}|^{p}}{\underline{u}%
+\delta }\leq f(\cdot ,\underline{u},\nabla \underline{u}).  \label{46}
\end{equation}%
Finally, test (\ref{46}) with $\varphi \in W_{b}^{1,p}(\Omega ),$ $\varphi
\geq 0$ a.e. in $\Omega ,$ and recall (\ref{9}), besides Green's formula 
\cite{CF}, to arrive at%
\begin{equation*}
\begin{array}{l}
\int_{\Omega }|\nabla \underline{u}|^{p-2}\nabla \underline{u}\nabla \varphi
\,\mathrm{d}x+\int_{\Omega }\frac{|\nabla \underline{u}|^{p}}{\underline{u}%
+\delta }\varphi \,\mathrm{d}x \\ 
\leq \int_{\Omega }|\nabla \underline{u}|^{p-2}\nabla \underline{u}\nabla
\varphi \,\mathrm{d}x-\left\langle \frac{\partial \underline{u}}{\partial
\eta _{p}},\gamma _{0}(\varphi )\right\rangle _{\partial \Omega
}+\int_{\Omega }\frac{|\nabla \underline{u}|^{p}}{\underline{u}+\delta }%
\varphi \,\mathrm{d}x \\ 
=\int_{\Omega }(-\Delta _{p}\underline{u}\ +\frac{|\nabla \underline{u}|^{p}%
}{\underline{u}+\delta })\varphi \,\mathrm{d}x\leq \int_{\Omega }f(\cdot ,%
\underline{u},\nabla \underline{u})\varphi \,\mathrm{d}x,%
\end{array}%
\end{equation*}%
because $\gamma _{0}(\varphi )\geq 0$ whatever $\varphi \in W^{1,p}(\Omega )$%
, $\varphi \geq 0$ a.e. in $\Omega $ (see \cite[p. 35]{CLM}).

Therefore, $\underline{u}$ and $\overline{u}$ satisfy assumption $(\mathrm{H.%
}2)$, whence Theorem \ref{T2} can be applied, and there exists a solution $%
u_{0}\in \mathcal{C}^{1,\tau }(\overline{\Omega }),$ $\tau \in ]0,1[,$ of
problem $\left( \mathrm{P}\right) $ such that 
\begin{equation}
\underline{u}\leq u_{0}\leq \overline{u}.  \label{11}
\end{equation}%
Moreover, $u_{0}$ is nodal. In fact, through (\ref{32}) and (\ref{12*}) we
obtain%
\begin{equation*}
\overline{u}=\delta ^{-p}z^{\overline{\omega }}-\delta \leq \delta
^{-p}(cd(x))^{\overline{\omega }}-\delta ,
\end{equation*}%
which actually means 
\begin{equation}
\overline{u}(x)<0\;\;\text{provided}\;\;d(x)<c^{-1}\delta ^{\frac{p+1}{%
\overline{\omega }}}.
\end{equation}%
Again, by (\ref{32})-(\ref{12*}) together with (\ref{4*}), it follows that%
\begin{equation*}
\underline{u}=\delta ^{\frac{1}{p}}z_{\delta }^{\omega }-\delta \geq \delta
^{\frac{1}{p}}(\frac{d(x)}{2c})^{\omega }-\delta ,
\end{equation*}%
which implies that%
\begin{equation}
\underline{u}(x)>0\;\;\text{as soon as}\;\;d(x)>2c\delta ^{\frac{1}{\omega
p^{\prime }}}\text{.}  \label{12}
\end{equation}%
On account of (\ref{11})-(\ref{12}), the conclusion follows.

On the other hand, on the basis of $(\mathrm{H.}4)$ and bearing in mind
Theorem \ref{T2}, there exists a nontrivial solution $u_{+}$ of $\left( 
\mathrm{P}\right) $ such that $u_{0}\leq u_{+}\leq \overline{u}_{+}$ and $%
u_{+}\geq 0$ on $\Omega $. In view of (\ref{32}), $\overline{u}_{+}=0$ once $%
d(x)\rightarrow 0$ and so $u_{+}$ vanishes as $d(x)\rightarrow 0$. This
completes the proof.
\end{proof}

\end{document}